\documentclass[11pt, leqno]{amsart}

\makeatletter

\def\subsection{\@startsection{subsection}{2}%
	{\parindent}{.5\linespacing}{-.5em}%
	{\normalfont\itshape\/}}
\def\subsubsection{\@startsection{subsubsection}{3}%
	{\parindent}{.5\linespacing}{-.5em}%
	{\normalfont\itshape\/}}
\def\appendix{\par\c@section\z@ \c@subsection\z@
	\gdef\theHsection{\Hy@AlphNoErr{section}}%
\let\sectionname\appendixname{}
\def\thesection{{\upshape\@Alph\c@section}}}

\newtheoremstyle{plain}{0.5\linespacing}{0.5\linespacing}{\itshape}%
	{\parindent}{\scshape}{.}{0.5em}%
	{\thmname{#1}\thmnumber{ #2}\thmnote{\normalfont{} (#3)}}
\newtheoremstyle{definition}{0.5\linespacing}{0.5\linespacing}%
	{\upshape}{\parindent}%
	{\itshape}{.}{0.5em} 
	{\thmname{#1}\thmnumber{ #2}\thmnote{\normalfont{} (#3)}}
\newtheoremstyle{remark}{0.5\linespacing}{0.5\linespacing}%
	{\upshape}{\parindent}%
	{\itshape}{.}{0.5em} 
	{\thmname{#1}\thmnumber{ #2}\thmnote{\normalfont{} (#3)}}

\renewenvironment{proof}[1][\proofname]{\par 
	\pushQED{\qed}%
	\normalfont\topsep6\p@\@plus6\p@\relax
	\trivlist%
	\item[\hskip\labelsep\hskip\parindent%
	\itshape%
	#1\@addpunct{.}]\ignorespaces%
	}{ 
	\popQED\endtrivlist\@endpefalse%
}
\makeatother
\allowdisplaybreaks

\usepackage{amsmath, amsthm, amssymb}
\usepackage{enumitem}
\usepackage{xcolor}
\definecolor{cite}{rgb}{0.30,0.60,1.00}
\definecolor{url}{rgb}{0.00,0.00,0.80}
\definecolor{link}{rgb}{0.40,0.10,0.20}

\usepackage[pdfusetitle,colorlinks,linkcolor=link,urlcolor=url,citecolor=cite,breaklinks,bookmarksdepth=3,bookmarksopen=true]{hyperref}
\usepackage[top=1.75in, headsep=0.25in, left=1.65in, right=1.7in,bottom=1.275in]{geometry}
\usepackage{url}
\usepackage{graphicx}
\usepackage{caption}
\usepackage{subcaption}
\usepackage[capitalise]{cleveref}
\usepackage{mathdots}
\usepackage{tikz-cd}
\usepackage{array}

\usepackage{mathtools}
\usepackage{physics}
\usepackage{float}
\usepackage[mathcal]{eucal}
\usepackage{microtype}
\usepackage{pagesel}
\usepackage{afterpage}
\usepackage{placeins}

\newtheorem{theorem}{Theorem}[section]

\newtheorem{proposition}[theorem]{Proposition}
\newtheorem{lemma}[theorem]{Lemma}

\newtheorem{corollary}[theorem]{Corollary}

\theoremstyle{definition}
\newtheorem{definition}[theorem]{Definition}
\theoremstyle{definition}

\theoremstyle{remark}
\newtheorem{remark}[theorem]{Remark}
\theoremstyle{remark}

\newcommand{\nNatural}{\mathbb{N}}
\newcommand{\zIntegers}{\mathbb{Z}}
\newcommand{\qRationals}{\mathbb{Q}}
\newcommand{\rReal}{\mathbb{R}}
\newcommand{\rRealPos}{\rReal_{+}}
\newcommand{\cComplex}{\mathbb{C}}

\newcommand{\idmap}{\mathrm{id}}
\renewcommand{\abs}[1]{\left|#1\right|}

\newcommand{\pullback}[1]{{#1}^{*}}

\newcommand{\comp}{\mathbin{\circ}}

\let\setminusaux\setminus{}	
\renewcommand*{\setminus}{\scalemath{\mathrel}{0.7}{\setminusaux}}

\newcommand{\st}{\mid}
\newcommand{\restrict}[2]{{#1}|_{#2}}

\DeclareMathOperator{\graph}{graph}

\renewcommand{\exp}{\operatorname{exp}}

\renewcommand{\leq}{\leqslant}

\let\tmp\epsilon{}
\let\epsilon\varepsilon{}
\let\varepsilon\tmp{}

\let\tmp\phi{}
\let\phi\varphi{}
\let\varphi\tmp{}

\let\emptyset\varnothing

\newcommand{\structure}{\mathcal{S}}
\newcommand{\RrPfaff}{\rReal_{\mathrm{rPfaff}}}
\newcommand{\Rexp}{\rReal_{\exp}}
\newcommand{\Ran}{\rReal_{\mathrm{an}}}
\newcommand{\Ralg}{\rReal_{\mathrm{alg}}}

\newcommand{\s}{\raisebox{.5ex}{\scalebox{0.6}{\#}}}
\newcommand{\so}{\s\kern-.02em{}o}
\NewDocumentCommand{\format}{sO{F}}{
	\IfBooleanTF{#1}
		{#2, \ell}
		{#2}
	}
\newcommand{\degree}[1][D]{#1}
\NewDocumentCommand{\FD}{sooo}{
	\IfBooleanTF{#1}								
		{											
			\IfNoValueTF{#4}
			{\mathcal{S}_{\order[#2]{1},\poly_{#2}\!\qty(#3)}}
			{
				{\mathcal{S}^{#4}_{\order[#2]{1},\poly_{#2}\!\qty(#3)}}
			}
		}
		{											
		\IfNoValueTF{#2}
			{\mathcal{S}}
			{
				\IfNoValueTF{#4}
				{
					{\mathcal{S}_{#2,#3}}
				}
				{\mathcal{S}^{#4}_{#2,#3}}
				}		
		}
}
\NewDocumentCommand{\polyfd}{oo}{
	\IfValueTF{#1}
		{\poly_{\format[#1]}\qty(\degree[#2])}
		{\poly_{\format}\qty(\degree)}
}

\newcommand{\an}[1]{{#1}^{\textrm{a.g.}}}

\newcommand{\cell}[1][C]{\mathcal{#1}}
\newcommand{\hyperbolicParameter}[1]{\{#1\}}
\newcommand{\point}{*}
\newcommand{\disc}[1][\relax]{
	\ifx\relax#1 
		D
	\else
		D\qty(#1)
	\fi
}

\NewDocumentCommand{\puncDisc}{so}{
	\IfBooleanTF{#1}
		{D_{\infty}}
		{D_{\circ}}
	\IfNoValueF{#2}{
		\qty(#2)
		}
}

\NewDocumentCommand{\annulus}{oo}{
	\IfNoValueTF{#1} 	
		{A} 			
		{A\qty(#1,#2)} 	
}
\RenewDocumentCommand{\circle}{o}{
	\IfNoValueTF{#1}{S}{S\qty(#1)}	
}

\NewDocumentCommand{\ext}{smm}{
	\IfBooleanTF{#1}		
		{\qty(#2){}^{#3}} 	
		{#2^{#3}}			
}
\NewDocumentCommand{\hExt}{smm}{
	\IfBooleanTF{#1}			
		{\qty(#2){}^{\hyperbolicParameter{#3}}} 
		{#2^{\hyperbolicParameter{#3}}}			
}
\newcommand{\initial}[3]{#1_{#2..#3}}
\NewDocumentCommand{\nuCover}{mo}{
	\IfNoValueTF{#2}
		{{#1}_{\times\nu}}
		{{#1}_{\times#2}}
}

\newcommand{\projbase}{\initial{\pi}{1}{\ell}}

\newcommand{\poly}{\operatorname{poly}}
\newcommand{\polyl}{\poly_{\ell}}
\newcommand{\varX}{\mathbf{x}}
\newcommand{\varZ}{\mathbf{z}}

\NewDocumentCommand{\polydisc}{soo}{
	\IfBooleanTF{#1}					
		{\Delta_{#2}\times\Delta_{#3}}	
		{\IfNoValueTF{#2}				
			{\Delta}
			{\Delta_{#2}}
		}
}
\RenewDocumentCommand{\order}{som}{
	\IfBooleanTF{#1}
		{
		\IfNoValueTF{#2}
			{\Omega\qty(#3)}
			{\Omega_{#2}\qty(#3)}
		}	
		{
		\IfNoValueTF{#2}
			{O\qty(#3)}
			{O_{#2}\qty(#3)}
		}	
}
\NewDocumentCommand{\ball}{ooo}{
	B
	\IfValueT{#1}
		{
			\qty(\IfValueTF{#2}
					{#1, #2}
					{#1}
				\IfValueT{#3}
					{; #3})
		}
}
\NewDocumentCommand{\diam}{mo}
	{
		\operatorname{diam}\qty(#1
		\IfValueT{#2}
			{; #2})
	}
\NewDocumentCommand{\dist}{mmo}
	{
		\operatorname{dist}\qty(#1,#2
		\IfValueT{#3}
			{\,; #3})
	}
\NewDocumentCommand{\latticeComplement}{o}{
	\IfNoValueTF{#1}
		{\cComplex\setminus\zIntegers^2}
		{\cComplex\setminus{\! #1}\zIntegers^2}
}

\makeatletter
\newdimen\scalemath@axis{}
\newcommand*{\scalemath}[3]{%
  #1{%
    \mathpalette{\scalemath@aux{#2}}{#3}%
  }%
}
\newcommand*{\scalemath@aux}[3]{%
  \begingroup
    \everyvbox{}%
    \settoheight\scalemath@axis{$#2\vcenter{}$}%
    \raisebox{\scalemath@axis}{%
      \scalebox{#1}{%
        \raisebox{-\scalemath@axis}{%
          $\m@th#2#3$%
        }%
      }%
    }%
  \endgroup
}
\makeatother 

\hypersetup{pdfauthor={Oded Carmon}}

\title{Analytically generated sharply o-minimal structures}

\author{Oded Carmon}
\address{Weizmann Institute of Science, Rehovot, Israel}
\email{oded.carmon@weizmann.ac.il}
\thanks{Funded by the European Union (ERC, SharpOS, 101087910), and by the ISRAEL SCIENCE FOUNDATION (grant No. 2067/23) and by the Shimon and Golde Picker - Weizmann Annual Grant.}

\subjclass[2020]{14P10, 03C64, 11G99, 11U09}

\date{\today}

\begin{document}

\begin{abstract}
	We describe a class of sharply o-minimal structures, called analytically generated structures, whose definable sets and their complexity filtration are determined by the collection of definable complex cells.

	We prove a polynomially effective parameterization theorem using real complex cells for real sets definable in such structures.
	Following Binyamini--Novikov, this allows us to establish a polynomially effective version of the Yomdin--Gromov lemma on $C^r$-smooth parameterizations of definable sets, which implies Wilkie's conjecture on polylogarithmic bounds for the amount of algebraic points of bounded height and degree in the transcendental part of a definable set.

	In addition, we obtain a polynomially effective preparation theorem for definable functions, similar to the subanalytic preparation theorems of Parusinski and of Lion--Rolin.
\end{abstract}

\maketitle

\addtocontents{toc}{\protect\setcounter{tocdepth}{1}}

{\small \tableofcontents}

\section{Introduction}\label{sec:introduction}
This paper is a follow-up to~\cite{BinyaminiCarmonNovikovComplexCells}, where we extended the theory of complex cells, introduced by Binyamini--Novikov in~\cite{BinyaminiNovikov2019} in the settings of $\Ralg$ and $\Ran$, to a sharply o-minimal structure $\structure$ (admitting sharp cellular decomposition), as introduced in~\cite{BinyaminiNovikovZak2022}.
In this setting, we obtained effective versions of the main cellular preparation and parametrization theorems of~\cite{BinyaminiNovikov2019}, with bounds which are polynomial in the degrees of the relevant sets.

These complex-geometric tools are applied in~\cite{BinyaminiNovikov2019} to definable \emph{real} sets in order to obtain effective versions of the Yomdin--Gromov algebraic lemma, as well as an effective preparation theorem in the spirit of the subanalytic preparation theorems of Parusinski and Lion--Rolin (see \cite[Sections 1.1 and 4.2]{BinyaminiNovikov2019}).
However, the arguments required for these applications (most importantly, \cite[Corollaries 34 and 35]{BinyaminiNovikov2019}) may not be carried out in a general sharply o-minimal structure~$\structure$ (see \Cref{sec: difference}). 

In this paper, we consider a natural reduct of a structure $\structure$ where we may apply the results of~\cite{BinyaminiCarmonNovikovComplexCells} to real definable sets.
We call this reduct the structure \emph{analytically generated} from $\structure$ (a similar definition appeared earlier in~\cite[Section 3.3]{BinyaminiNovikov2023}).
A sharply o-minimal structure equivalent (in the sense of reduction of FD-filtrations, see~\cite[Definition 2.8]{BinyaminiCarmonNovikovComplexCells}) to its analytically generated reduct will simply be called an \emph{analytically generated structure}.

After some technical set-up in \Cref{sec: ag structures} regarding such reducts, we prove in \Cref{sec: real parameterization} the following polynomially effective parameterization result for real sets definable in an analytically generated structure, using real complex cells and real cellular maps (see \Cref{thm: real parameterization}).
For the rest of this section, let $\{\FD[\format][\degree]\}$ be an analytically generated sharply o-minimal structure with sharp cell decomposition.

{
	\renewcommand{\thetheorem}{A}
	\begin{theorem}
		Let $S_1,\dots,S_k\subset \rReal^n $ be sets in ${\structure}_{\format,\degree}$.
		Then there exists a collection of $\polyfd[\format][\degree,k,1/\sigma]$ prepared real cellular maps $\{f_j:\hExt{\cell_j}{\sigma}\to\cComplex^n\}$, for complex cells $\hExt{\cell_j}{\sigma}$.
		
		Each map $f_j$ is in ${\structure}_{\order[\format]{1},\polyfd}$, the restrictions $\{\restrict{f_j}{\rRealPos\hExt{\cell_j}{\sigma}}\}$ are compatible with each of the sets $S_i$ and we have that ${\rReal^n\subset \bigcup_j f_j(\rRealPos\cell_j)}$.
	\end{theorem}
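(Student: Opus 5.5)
The plan is to reduce to the complex cellular parameterization theorem of~\cite{BinyaminiCarmonNovikovComplexCells}, applied to complexifications of the sets $S_i$. Analytic generation is exactly what makes these complexifications available. Since $\structure$ is equivalent to its analytically generated reduct, each $S_i\in\FD[\format][\degree]$ can be written, up to a polynomial loss in $\format,\degree$, as a boolean combination of real parts $\rReal^n\cap X$ of definable complex cells (or their images), with complexity $\polyfd$. Let $X_{i,1},\dots,X_{i,m_i}\subset\cComplex^n$ be these complex sets. There are $\polyfd$ of them in total, and all of them are closed under complex conjugation; if necessary, replace $X$ by $X\cup\bar X$. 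Compatibility of a real set with the $S_i$ then follows from compatibility with all the $X_{i,j}\cap\rReal^n$.

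Step one is to apply the cellular parameterization theorem of~\cite{BinyaminiCarmonNovikovComplexCells}. Take the family $\{X_{i,j}\}$ together with a bounding set, covering $\rReal^n$ after the usual reduction to $[-1,1]^n$ by the charts $x\mapsto 1/x$ coordinatewise, which costs only a factor $2^n$. This yields $\polyfd[\format][\degree,k,1/\sigma]$ prepared cellular maps $f_j:\hExt{\cell_j}{\sigma}\to\cComplex^n$ of complexity $(\order[\format]{1},\polyfd)$, each compatible with every $X_{i,j}$, whose images of $\cell_j$ cover the relevant region.

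Step two is to make everything real. I would rerun the inductive proof of that theorem (fibrewise Weierstrass-type preparation, cellular decomposition over the base, and refinement via the fundamental lemma for the $\sigma$-extension) in the conjugation-equivariant form, as in~\cite{BinyaminiNovikov2019}. The inductive hypothesis becomes: real cells, real cellular maps, and coverage of the real points by $f_j(\rRealPos\cell_j)$. At each fibre step the discs, annuli and punctured discs are centered at points that are either real or come in conjugate pairs. Since only real coverage is needed, we keep only cells with real centers and discard conjugate pairs. Real points of a fibre then lie in $\rRealPos$- or $\rReal_-$-parts of real cells, and the sign is absorbed by a real reparameterization $z\mapsto -z$. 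Each map is still in $\FD[\order[\format]{1}][\polyfd]$, because conjugation and reflection are algebraic operations of bounded format. Compatibility of $\restrict{f_j}{\rRealPos\hExt{\cell_j}{\sigma}}$ with $S_i$ follows because $f_j$ is compatible with each $X_{i,j}$ and maps real points to real points.

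The main obstacle is the real refinement step inside the induction. Refining a cell to a $\sigma$-extension via cellular covers by cells of smaller size uses covers of annuli by discs and of punctured discs by annuli/log-charts. These covers must be chosen so that the real positive part of the original cell is covered by positive real parts of real subcells, and the count must remain $\poly(1/\sigma)$. I would first try the explicit real covers of~\cite[Section 2]{BinyaminiNovikov2019}: discs centered on $\rRealPos$ along a geometric sequence of radii, together with the real branch of $\log$ for punctured discs. The point to check is that the definability complexity of the resulting maps stays within $(\order[\format]{1},\polyfd)$ in the sharp setting.
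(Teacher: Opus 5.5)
Your proposal has a genuine gap. It is missing both of the ideas that make the theorem work: the handling of projections, and a common domain for the analytic data.

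\textbf{Projections.} The reduction in your first paragraph fails. Membership in $\an{\structure}_{\format,\degree}$ only gives a construction sequence for $S_i$ from basic sets $Z\cap\rReal\cell$, using boolean operations, products with $\rReal$ and \emph{projections}. It does not express $S_i$ as a boolean combination of traces $\rReal^n\cap X$ of complex analytic sets $X\subset\cComplex^n$. (If by ``images'' you mean images of real parts under cellular maps, that is essentially the conclusion of the theorem, so it cannot be assumed.) Projection does not commute with taking real points: for $Z=\{y^2=x\}$ one has $\pi(Z\cap\rReal^2)=[0,\infty)\neq\rReal=\rReal\cap\pi(Z)$. Moreover, $\pi(Z)$ is in any case not analytic. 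So the only ``complexification'' of such an $S_i$ is $S_i$ itself, and no complex cellular map can be compatible with it.

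The paper handles this by working in a larger ambient space $\rReal^\ell$ with $\ell=\order[\format]{1}$. \Cref{lem: absorb product with R} moves the products with $\rReal$ past the projections, so each $S_i$ is obtained from sets $Z_{i,j}\cap\rReal\cell_{i,j}\subset\rReal^\ell$ by projections and boolean operations only. The cover is then built to be \emph{cylindrical}, so that compatibility with these sets upstairs descends to all their projections (\Cref{lem: cylindrical compatibility} and \Cref{cor: cylindrical cover and generated sets}). Nothing in your proposal plays this role.

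\textbf{Common domain.} Even for the basic sets, your Step one cannot be carried out. Each $Z_{i,j}$ is an analytic hypersurface only of its own cell $\ext{\cell_{i,j}}{1/2}$, and these cells are different. $Z_{i,j}$ need not extend to an analytic subset of any common domain, such as your charts of $\cComplex^n$. The \s CPT, however, requires all the analytic sets to lie in a single complex cell. This is exactly the obstruction described in \Cref{sec: difference}, and \Cref{lem: simultaneous cover} is what resolves it. That lemma proceeds by induction on the length. It first makes the fibre radii pairwise distinct via a discriminant, then clusters them around $0$ with gap $\gamma=1-1/\order[\ell]{k}$. The result is a cylindrical simultaneous cellular cover of polynomial size, in which any map whose image meets some $\cell_{i,j}$ has image inside $\ext{\cell_{i,j}}{1/2}$.

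Only after this does the paper pull back the relevant $Z_{i,j}$ and apply the real \s CPT in each cell. The pulled-back family also includes the coordinate hyperplanes, the graphs of the radii of $\cell_{i,j}$, and their negatives; these ensure that real parts of the final cells either lie in $\rReal\cell_{i,j}$ or avoid it.

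Conversely, the step you single out as the main obstacle is not where the difficulty lies. A conjugation-equivariant refinement and CPT is already available in real form in~\cite{BinyaminiCarmonNovikovComplexCells}, and the paper uses it as a black box.
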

}

This allows the arguments of~\cite{BinyaminiNovikov2019} to follow through, in many cases verbatim, and yields our two main results, a sharp Yomdin--Gromov algebraic lemma (\Cref{thm: sharp yomdin gromov}) and a preparation theorem (\Cref{thm: S-preparation}) as in~\cite{BinyaminiNovikov2019}:

{
	\renewcommand{\thetheorem}{B}
	\begin{theorem}
		Let $X\subset [0,1]^n$ be of dimension $\mu$ and in $\FD[\format][\degree]$.
		Let $r\in\nNatural$.

		Then there exists a collection of $\polyfd\cdot r^\mu$ maps $\varphi_i:(0,1)^\mu\to X$, each of them in $\FD*[\format][\degree,r]$, such that $X=\bigcup_i \varphi_i((0,1)^\mu)$.
		Furthermore, for every $\boldsymbol{\alpha}\in \nNatural^\mu$ such that $\alpha_1+\cdots+\alpha_\mu\leq r$, the partial derivative of order $\boldsymbol{\alpha}$ of every $\varphi_i$ exists and is bounded uniformly by $\alpha_1!\cdots\alpha_\mu !$.
	\end{theorem}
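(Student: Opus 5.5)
We derive Theorem B from Theorem A, following the Binyamini--Novikov argument for the Yomdin--Gromov lemma via cellular parameterization. Throughout we take $\sigma$ to be a fixed small constant, say $\sigma=1/2$, since the derivative bounds only need a fixed hyperbolic extension.

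\textbf{Step 1: cellular cover of $X$.} Apply Theorem A to $X$, with $k=1$ and this fixed $\sigma$. This gives $\polyfd$ prepared real cellular maps $f_j:\hExt{\cell_j}{\sigma}\to\cComplex^n$. Each $f_j$ lies in $\FD[\order[\format]{1}][\polyfd]$, and the restriction $\restrict{f_j}{\rRealPos\cell_j}$ is compatible with $X$. We keep only those $j$ with $f_j(\rRealPos\cell_j)\subset X$. Their images then cover $X$, and each such real cell has real dimension at most $\mu$. Because the maps are prepared, we may (after reordering coordinates and composing with the standard parameterization of the real part of the cell) regard each retained map as an analytic map defined on a real cell of dimension $\mu'\leq\mu$. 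Lower-dimensional pieces are padded by constant coordinates to obtain maps from $(0,1)^\mu$.

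\textbf{Step 2: derivative bounds from the extension.} This is the core step, done exactly as in \cite[Section 4]{BinyaminiNovikov2019}. The real part $\rRealPos\cell_j$ is a product-like tower of real intervals, disc pieces and punctured-disc pieces. Each fiber admits a $\sigma$-hyperbolic extension in $\hExt{\cell_j}{\sigma}$.

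We cover each one-dimensional real fiber by $O(r)$ subintervals. On each subinterval, the affine (or logarithmic) rescaling to $(0,1)$ extends holomorphically to a complex neighbourhood of definite size inside the extension. The key input is that $f_j$ is bounded on $\hExt{\cell_j}{\sigma}$ (being prepared and mapping into a bounded set; we first normalize so that $X\subset[0,1]^n$ forces bounded images). Given this, the Cauchy estimates in a polydisc of radius about $1/r$ around each point yield $\abs{\partial^{\boldsymbol\alpha}\varphi}\leq C\,\alpha!$ after rescaling.

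Fibers are treated inductively, and the subdivision of the base induces a subdivision of the fibers. This gives $O(r)$ pieces per dimension, hence $O(r^\mu)$ pieces per cell. A final linear rescaling by a constant removes the constant $C$; the resulting loss in the number of pieces is absorbed into $\polyfd\cdot r^\mu$.

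\textbf{Step 3: complexity bookkeeping.} Each $\varphi_i$ is the composition of $f_j$ with explicit affine, logarithmic or power-type reparameterizations whose format is $O(1)$ and whose degree is polynomial in $r$. Composition in a sharply o-minimal structure then puts $\varphi_i$ in $\FD*[\format][\degree,r]$.

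\textbf{Main obstacle.} The hard part is Step 2 near punctured-disc and annulus fibers. There the naive uniform subdivision fails, because the real part accumulates at the puncture. Following Binyamini--Novikov, we handle this by passing to logarithmic coordinates and using the $\sigma$-extension's conformal width in those coordinates, which gives uniformly sized polydiscs. The subtle points are to check that this works verbatim for the real cellular maps furnished by Theorem A, and that boundedness of $f_j$ on the full extension holds without increasing the number of cells beyond $\polyfd$.
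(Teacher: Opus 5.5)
Your overall plan matches the paper's. Theorem~A is proved precisely so that it can replace \cite[Corollary 34]{BinyaminiNovikov2019}, after which the paper runs the reparametrization argument of \cite[Sections 9 and 10]{BinyaminiNovikov2019} verbatim. Your Step~1 is fine, and so is the bookkeeping in Step~3 as far as affine and power maps go. The boundedness of $f_j$ on the whole extension, which you worry about, comes from the construction in the proof of Theorem~A, where the cells $D(2)\cup D_\infty(1)$ are built into the simultaneous cover.

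The gap is Step~2, which is where all the work lies. Covering each real fibre by pieces whose affine or logarithmic rescaling has a holomorphic neighbourhood of definite relative size, and then applying Cauchy estimates, cannot give a count depending only on $\format,\degree,r$. The reason is that Theorem~A gives no control on the moduli of annulus fibres. For instance, $X=\{xy=\epsilon\}\cap[0,1]^2$ has complexity independent of $\epsilon$ and may be covered by the single cell $A(\epsilon,1)$ with the map $z\mapsto(z,\epsilon/z)$. The segment $(\epsilon,1)$ has hyperbolic length of order $\log\log(1/\epsilon)$ in $A(\epsilon,1)^{\{\sigma\}}$. On the other hand, a piece admitting a neighbourhood of relative size $\gtrsim 1/r$ has hyperbolic diameter $O(r)$. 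So no $O(r)$ such pieces cover the segment once $\epsilon$ is small.

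Logarithmic coordinates do not rescue this. They do give neighbourhoods of fixed width, but the interval has length $\log(1/\epsilon)$, so ``uniformly sized polydiscs'' means an $\epsilon$-dependent number of them. Moreover, $\log$ is not definable in general (e.g.\ in the semialgebraic case), so logarithmic reparametrizations cannot appear in maps that must lie in $\FD*[\format][\degree,r]$.

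The missing idea is that the uniform count must use the fact that only derivatives of order at most $r$ need to be controlled. As a symptom: if your Step~2 worked, it would give bounds $C^{\alpha_1+\dots+\alpha_\mu}\alpha_1!\cdots\alpha_\mu!$ for all orders with an $r$-independent number of pieces. The factor $r^\mu$ and the $r$-dependence of the complexity in Theorem~B would then be unexplained.

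What is needed are substitutions of degree about $r$. In the example, on $x\in(\sqrt\epsilon,1)$ put $x=s^r$ with $s\in(\epsilon^{1/(2r)},1)$. Then both $s^r$ and $\epsilon s^{-r}$ have $k$-th derivatives at most $(2r)^k$ for $k\le r$. Hence $2r$ affine pieces give the required bounds uniformly in $\epsilon$. On $x\in(\epsilon,\sqrt\epsilon)$ one does the same with $\epsilon/x=s^r$.

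For a general bounded holomorphic function on an annulus extension, one can split off the parts holomorphic in $x$ and in $r_1/x$ via the Laurent expansion, and treat each on the appropriate half. A finite-order argument of this kind, carried out over the fibred structure of the cells from Theorem~A, is what the paper imports from \cite{BinyaminiNovikov2019}. Your sketch replaces it with an argument that fails on long annuli.
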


	\renewcommand{\thetheorem}{C}
	\begin{theorem}
		Let $f_1,\dots,f_M : \rReal^n \to\rReal$ be functions in $\FD[\format][\degree]$ and denote the coordinates of $\rReal^n$ by $\varX=(x_1,\dots,x_n)$.
	Let $\mu>0$.
	Then there is a cover of $\rReal^n$ by a collection of $\polyfd[\format][\degree,M,1/\mu]$ prepared real cellular maps $\{\phi_j:\ext{\cell_j}{1/2}\to\cComplex^n\}$, each of them in $\FD*[\format][\degree]$ and compatible with the zero-sets of the coordinate functions $x_1,\dots,x_n$, such that for each $j$ we have the following expansion of each of the functions $f_i$ in $\phi_j(\rRealPos\cell_j)$:
	\begin{equation}
		f_i(\varX)=\prod_{k=1}^n \abs{\varX_k-\theta_{j,k}(\initial{\varX}{1}{k-1})}^{\alpha_{i,j,k}}
		\cdot
		U_{i,j}(\varX),
	\end{equation}
	where $\alpha_{i,j,k}\in\qRationals$ is of size at most $\polyfd$ and the functions $\theta_{j,k},U_{i,j}:\phi_{j}(\rRealPos\cell_j)\to\rReal$ are in $\FD*[\format][\degree]$.
	The sign of $U_{i,j}\comp \phi_j$ is constant on $\rRealPos\cell_j$, and, if this sign is not $0$, we have that the diameter of  $\log \abs{U_{i,j}}\comp \phi_j (\rRealPos\cell_j)$ is less than~$\mu$.

	In addition, if $\theta_{j,k}$ is not identically $0$ over $\phi_j(\rRealPos\cell_j)$, then it is nowhere vanishing and we have 
	\begin{equation}
		\abs{\varX_k - \theta_{j,k}(\initial{\varX}{1}{k-1})}\leq \mu\abs{\varX_k}
	\end{equation}
	for all $\varX\in\phi_{j}(\rRealPos\cell_j)$.
	If, moreover, we have $\alpha_{i,j,k}\neq 0$ for some $i$, then we also have that the left-hand side of \eqref{eq: center condition} is nowhere vanishing over $\phi_j(\rRealPos\cell_j)$.
	\end{theorem}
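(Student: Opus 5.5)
The plan is to follow the proof of the subanalytic preparation theorem in \cite[Section 4.2]{BinyaminiNovikov2019}, with Theorem~A replacing their real cellular parameterization. The argument is an induction on $n$.

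\emph{Initial decomposition.} First apply Theorem~A with $\sigma$ a fixed constant to the sets $\{f_i=0\}$, $\{f_i>0\}$, $\{f_i<0\}$ and $\{x_k=0\}$. This gives $\polyfd[\format][\degree,M]$ prepared real cellular maps $\phi_j:\hExt{\cell_j}{\sigma}\to\cComplex^n$ whose real parts are compatible with all of these sets. Compatibility with $\{x_k=0\}$ is part of the conclusion. On each $\rRealPos\cell_j$ the sign of $f_i\comp\phi_j$ is constant, and we discard the cells where it vanishes.

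\emph{Monomialization on the cell.} On each remaining cell, the analytically generated hypothesis lets us pass to the holomorphic continuation $F_{i,j}=f_i\comp\phi_j$ on $\hExt{\cell_j}{\sigma}$. This continuation is definable with complexity $\polyfd$, and it is nonvanishing on the cell after a further cellular refinement: we apply the cellular preparation theorem of \cite{BinyaminiCarmonNovikovComplexCells} to the complex zero set and shrink to $\ext{\cell_j}{1/2}$. The monomialization lemma for nonvanishing holomorphic functions on complex cells (the analogue of \cite[Corollaries 34, 35]{BinyaminiNovikov2019}, proved earlier in the paper in this setting) then gives
\[
F_{i,j}=\prod_k z_k^{\beta_{i,j,k}}\cdot V_{i,j},
\]
with exponents $\beta_{i,j,k}$ of size $\polyfd$. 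Here $\log V_{i,j}$ has bounded variation on $\ext{\cell_j}{1/2}$, and the bound shrinks to $\mu$ once we pass to $\hExt{\cell_j}{\sigma}$ with $\sigma$ of order $\mu$. This choice is where the $\poly(1/\mu)$ dependence in the count comes from.

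\emph{Transport back to $\varX$-coordinates.} A prepared cellular map has triangular form
\[
x_k=\theta_{j,k}(\initial{\varX}{1}{k-1})+\psi_{j,k}(\initial{z}{1}{k-1})\,z_k^{q}.
\]
Hence $z_k^{q}$ is $(x_k-\theta_{j,k})/\psi_{j,k}$, and $\psi_{j,k}$ is itself a nonvanishing function of the earlier variables. We prepare the factors $\psi_{j,k}$ by the inductive hypothesis in dimension $k-1$ and absorb them into the unit. The exponents then become rational with denominators bounded by $q$, which keeps them of size $\polyfd$. The resulting $U_{i,j}$ has constant sign, and $\log\abs{U_{i,j}}$ has diameter less than $\mu$ on $\phi_j(\rRealPos\cell_j)$.

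\emph{Center condition.} To get $\abs{x_k-\theta_{j,k}}\leq\mu\abs{x_k}$ whenever $\theta_{j,k}\not\equiv0$, compare $x_k-\theta_{j,k}$ with $\theta_{j,k}$ on each fiber. If the fiber is an annulus or disc centered at $\theta_{j,k}$ whose radius is not $\mu$-small relative to $\abs{\theta_{j,k}}$, we re-center at $0$. We can do this by refining the fiber with $\poly(1/\mu)$ subcells that are either small relative to $\abs{\theta_{j,k}}$ or centered at $0$, again using Theorem~A applied to the set $\{\abs{x_k-\theta_{j,k}}\leq\mu\abs{x_k}\}$. Because $\theta_{j,k}$ is nowhere vanishing (by compatibility with $\{x_k=0\}$), this refinement is admissible.

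The main obstacle is the complexity bookkeeping: all refinements must stay in $\FD*[\format][\degree]$ and the number of cells must be $\polyfd[\format][\degree,M,1/\mu]$. This requires the continuations to $\ext{\cell_j}{1/2}$ of the real-defined functions to have controlled complexity. That is precisely the input the analytically generated assumption provides through Theorem~A.
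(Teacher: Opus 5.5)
\paragraph{Gap 1: no holomorphic continuation of $f_i\comp\phi_j$.} There is a genuine gap in your second step. Applying Theorem~A to the sign sets $\{f_i=0\}$, $\{f_i>0\}$, $\{f_i<0\}$ only makes the real images of the cells compatible with these sets. It gives no holomorphic continuation of $f_i\comp\phi_j$ to $\hExt{\cell_j}{\sigma}$. The analytically generated hypothesis concerns how definable \emph{sets} are generated, so it does not supply one either. For instance, for $f(x)=\abs{x-1/2}+1$ all sign sets are trivial. A cell with real part $(0,2)$ is then compatible with them, although $f$ is not even analytic there.

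The missing idea is to apply \Cref{thm: real parameterization} in $\rReal^{n+1}$ to the \emph{graphs} of $f_1,\dots,f_M$ together with $\{x_1=0\},\dots,\{x_{n+1}=0\}$, with $\sigma=\polyfd^{-1}\mu$ from the start, and then to use cylindricity. The fibers over a base map $\initial{(\phi_j)}{1}{n}$ cover the vertical lines, and a fiber not of type $\point$ maps onto vertical segments. So $\graph f_i$ over the base is the image of a cell with fiber $\point$. Thus $f_i\comp\initial{(\phi_j)}{1}{n}=\phi_{j,n+1}(z_1,\dots,z_n)$ is itself a coordinate of the prepared cellular map, hence holomorphic on the $\{\sigma\}$-extension. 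Compatibility with $\{x_{n+1}=0\}$ (a complex coordinate hyperplane in the proof of Theorem~A) makes it identically zero or nowhere zero there. \Cref{lem: monomialization} then applies to all coordinates $\phi_{j,k}$ at once. Because the maps are prepared, $x_k=\pm z_k^{q_k}+\varphi_{j,k}(z_1,\dots,z_{k-1})$, so $\abs{z_k}=\abs{x_k-\theta_{j,k}}^{1/q_k}$ after recursive substitution. No factor $\psi_{j,k}$ and no induction on $n$ are needed. Your extra CPT refinement to make $F_{i,j}$ nonvanishing would actually hurt: composing $\phi_j$ with further cellular maps destroys the prepared form on which this transport relies.

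\paragraph{Gap 2: the center condition.} Your center-condition step also fails as stated. Re-applying Theorem~A to $\{\abs{x_k-\theta_{j,k}}\leq\mu\abs{x_k}\}$ produces new maps with new centers about which nothing is known, so the argument is circular. Also, compatibility with $\{x_k=0\}$ alone does not make a nonzero center nowhere vanishing: a fiber around a center that vanishes somewhere on the base can still avoid $x_k=0$.

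The paper instead uses a structural property of the clustering in the \s CPT, recorded in \Cref{rem: no bad annuli}. All nonzero sections are first clustered around $y_0=0$, so a fiber centered at a nonzero section does not wind around the origin, even after extension. For a $\delta$-extension this directly gives $\abs{\theta_{j,k}}>0$ and $\abs{x_k-\theta_{j,k}}<\frac{\delta^{q_k}}{1-\delta^{q_k}}\abs{x_k}\leq\mu\abs{x_k}$ once $\sigma$ is small in terms of $\mu$. No further refinement is needed.

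Finally, you do not address the last assertion. It holds because \Cref{lem: monomialization} gives exponent $0$ on fibers of type $\point$ or $\disc$. So a nonzero $\alpha_{i,j,k}$ forces the $k$-th fiber to be of type $\puncDisc$, $\puncDisc*$ or $\annulus$, whence $z_k\neq0$ and $x_k\neq\theta_{j,k}$.
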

}

A similar version of the Yomdin--Gromov lemma, in the setting of a sharply o-minimal structure with \emph{sharp derivatives}, was established by Binyamini--Novikov--Zak in~\cite[Lemma 2]{BinyaminiNovikovZak2024} and used there to prove Wilkie's conjecture on polylogarithmic bounds for the number of algebraic points on sets definable in such structures (for example, their results hold for the structure $\RrPfaff$ generated by restricted Pfaffian functions, which has sharp derivatives).
They then deduce from this Wilkie's conjecture in the original setting of $\Rexp$.

Our version of the Yomdin--Gromov lemma also yields, in essentially the same way, Wilkie's conjecture for an analytically generated structure (see \Cref{thm: Wilkie conjecture}):

{
	\renewcommand{\thetheorem}{D}
	\begin{theorem}
		Let $\{\FD[\format][\degree]\}$ be an analytically generated sharply o-minimal structure with sharp cell decomposition and let $X\in\FD[\format][\degree]$.
		Then
		\begin{equation}
			\#X^{\operatorname{trans}}(g,H)=\polyfd[\format][\degree, g, \log H].
		\end{equation}
	\end{theorem}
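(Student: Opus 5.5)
The plan is to follow the Pila--Wilkie counting scheme in the effective form of Binyamini--Novikov--Zak~\cite{BinyaminiNovikovZak2024}, replacing their Yomdin--Gromov lemma (which needs sharp derivatives) by Theorem~B. Throughout, $\#X^{\operatorname{trans}}(g,H)$ counts points of degree at most $g$ and height at most $H$ lying on $X$ but off the algebraic part of $X$.

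\emph{Reduction to $g=1$ and to $[0,1]^n$.}
\begin{itemize}
\item Points of degree $\le g$ are handled by the standard trick of coding an algebraic point by the coefficients of the minimal polynomials of its coordinates. This replaces $X$ by a set $X'$ that is definable from $X$ by $\order[\format]{1}$ operations; it lies in $\FD*[\format][\degree,g]$, and its rational points of height $\poly(H)^{g}$ control the original points.
\item Splitting into the $2^n$ charts $x_i\mapsto 1/x_i$ reduces to $X\subset[0,1]^n$, at no cost in the FD filtration.
\end{itemize}

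\emph{Interpolation step.} Let $\mu=\dim X$. Apply Theorem~B with $r$ of order $\log H$, more precisely $r$ polynomial in $n,\mu$ times a suitable degree $d$ of interpolating hypersurfaces. This produces $\polyfd\cdot r^\mu$ maps $\varphi_i:(0,1)^\mu\to X$ with derivatives up to order $r$ bounded by $\alpha!$.
\begin{itemize}
\item The Bombieri--Pila determinant method (as in~\cite[Section 3]{BinyaminiNovikovZak2024}) then applies on each image, after subdividing $(0,1)^\mu$ into $\poly$ many subcubes.
\item The outcome is that the rational points of height $\le H$ in each $\varphi_i((0,1)^\mu)$ lie in $\poly(\log H)$ hypersurfaces of degree $d=\poly(n,\log H)$.
\end{itemize}
The essential point is that the number of maps is $\polyfd\cdot r^\mu=\polyfd[\format][\degree,\log H]$, and each $\varphi_i$ lies in $\FD*[\format][\degree,r]$. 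Hence intersecting with the hypersurfaces keeps us in a class of complexity $\polyfd[\format][\degree,\log H]$.

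\emph{Induction on dimension.} For each hypersurface $Y$ we pass to $X\cap Y$, which has dimension at most $\mu-1$, or else lies inside the algebraic part. We then recurse.
\begin{itemize}
\item The key bookkeeping is that each step multiplies the counts by $\polyfd[\format][\degree,\log H]$ and increases the format by $\order{1}$.
\item There are at most $n$ steps, so the total is polynomial in $\degree$ and $\log H$, with exponent depending only on $\format$.
\end{itemize}

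\emph{Main obstacle.} Controlling the transcendental part through the recursion is the step I expect to be hardest. One needs $X^{\operatorname{trans}}\cap Y\subset (X\cap Y)^{\operatorname{trans}}\cup(\text{algebraic blocks})$, and the points in blocks that are not full algebraic parts of $X$ must still be counted.

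I would handle this exactly as in~\cite{BinyaminiNovikovZak2024}. The idea there is to use a uniform, effective version of the block decomposition: algebraic points lie in $\polyfd$ many connected semialgebraic blocks, definable in the family. Connected positive-dimensional blocks contained in $X$ lie in $X^{\operatorname{alg}}$ and are discarded. This needs definability of the family of "semialgebraic subsets of degree $d$ contained in $X$" with complexity $\polyfd[\format][\degree,d]$, which follows from sharp cell decomposition in $\structure$.

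Since analytically generated structures are reducts closed under the relevant operations up to FD-reduction, all intermediate sets remain in the analytically generated structure. This is what allows Theorem~B to be reapplied at each stage.
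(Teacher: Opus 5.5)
Your top-level plan coincides with the paper's proof. That proof is a single observation: the proof of Theorem~1 of Binyamini--Novikov--Zak goes through verbatim with Theorem~B in place of their Lemma~2. However, the details you add are not that argument, and the first of them fails.

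\textbf{The reduction to $g=1$.} Coding a point of degree $\le g$ by the coefficient vectors of the minimal polynomials of its coordinates produces a set $X'\subset\mathbb{R}^{n(g+1)}$. Its format is therefore at least $n(g+1)$, not $O_F(1)$. Every bound you then derive from $X'$ has the form $\mathrm{poly}_{F,g}(D,\log H)$, with exponents depending on $g$, rather than the claimed $\mathrm{poly}_F(D,g,\log H)$.

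The coding also does not respect transcendental parts. Suppose $x\in X$ has a coordinate $x_i$ of degree $g_i\ge 2$ with minimal polynomial $a_i$. Then $s\mapsto a_i+s\,(t-x_i)$ is a segment of monic polynomials of degree $g_i$, all vanishing at $x_i$, and it stays inside $X'$. So the code of every non-rational point of $X$ lies in $(X')^{\mathrm{alg}}$, and counting $(X')^{\mathrm{trans}}$ says nothing about $X^{\operatorname{trans}}(g,H)$. A block-family repair in the style of Pila would still not fix the format growth. The argument the paper cites makes no such reduction: $g$ stays a parameter in the interpolation step applied to the maps of Theorem~B, which is how the dependence on $g$ remains polynomial.

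\textbf{Other steps that need the cited argument.} First, your dichotomy ``$X\cap Y$ has dimension at most $\mu-1$ or lies in the algebraic part'' is false. If $X$ lies in a hyperplane, the interpolating hypersurface may contain all of $X$, and nothing is gained. One must also induct on the dimension of the algebraic set cut out by the successive hypersurfaces, choosing each new one not to contain it. The induction stops when the top-dimensional cells are open in an algebraic set of the same dimension.

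Second, a uniform family of ``all semialgebraic sets of degree $d$ contained in $X$'' has a parameter space of dimension about $d^n$, with $d=\mathrm{poly}(\log H)$. Such a family does not have format $O_F(1)$. In the sharp setting no family is needed, since each specific intersection of $X$ with hypersurfaces of degree $d$ already has format $O_F(1)$ and degree $\mathrm{poly}_F(D,d)$.

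Once these steps are taken from the cited argument, the only thing left to verify is what the paper verifies. Theorem~B supplies $\mathrm{poly}_F(D)\cdot r^\mu$ maps of complexity $\mathrm{poly}_F(D,r)$ with the required derivative bounds, exactly as Lemma~2 of Binyamini--Novikov--Zak does.
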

}

While $\RrPfaff$ and its reducts serve as the only currently known examples of sharply o-minimal reducts of $\Ran$, it is expected that certain (conjectural) sharply o-minimal structures which could be relevant to applications in number theory will be analytically generated but might not admit sharp derivatives --- for example, the structure generated by restrictions of the complex Gamma function to discs in the complex plane.
Additionally, in a subsequent paper~\cite{BinyaminiCarmonNovikovLE}, we use the framework of analytically generated structures and the corresponding preparation theorem to deduce Wilkie's conjecture for $\Rexp$ in an improved form compared to that in~\cite{BinyaminiNovikovZak2024}~--- where the relevant polynomial bounds depend only on the complexity of the set on which we count algebraic points, with no dependence on its geometry.

Throughout this paper, we use the same notation and require the same preliminaries as reviewed in~\cite[Section 2 and Appendix A]{BinyaminiCarmonNovikovComplexCells}, and refer the reader there for the definitions of a sharply o-minimal structure and sharp cellular decomposition (which we abbreviate from now on as \so-minimal and \s CD, respectively), and of complex cells and the sharp cellular parameterization theorem (which we abbreviate as~\s CPT).

\subsection{Difference from the semialgebraic and subanalytic settings}\label{sec: difference}
We now explain in more detail the problem addressed by this paper.
In~\cite[Section 4.1]{BinyaminiNovikov2019}, the authors apply the cellular preparation theorem (CPT) for complex sets (either algebraic or analytic) to real sets in the following way.

In the case of a real semialgebraic set $X$, defined in some bounded box, one holomorphically continues all polynomials appearing in the definition of $X$ to a polydisc of some fixed extension, say a $1/2$-extension.
One then applies the CPT to the corresponding collection of complex algebraic hypersurfaces to obtain a cover of the holomorphic continuation of $X$ by images of complex cells under cellular maps.
Restricting these maps to the real parts of the resulting complex cells yields a cover of the original real set $X$.

In the subanalytic case, one similarly considers the real analytic functions determining the set $X$.
By intersecting $X$ with sufficiently small boxes, one may holomorphically continue each of these functions to a small polydisc admitting a $1/2$-extension and proceed as before.
The number of such polydiscs depends on the set $X$, but we have no effective estimate for it in general, and so cannot obtain effective estimates for the resulting cover of the set $X$.
Definability of these continuations is automatic when working in $\Ran$ (using small enough polydiscs), but constructing holomorphic continuations which remain in a given reduct is more difficult (see \cite{Kaiser2016}).

Furthermore, while~\cite[Corollaries 34 and 35]{BinyaminiNovikov2019} are stated for a single (semialgebraic or subanalytic) set, we wish to obtain cellular decompositions of $\rReal^n$ compatible with a given finite collection of sets.
This may be achieved by successive application of these results to each set in the collection, but for questions of effectivity it is well known that better bounds are obtained when one treats all sets in the collection ``at once''.
In the semialgebraic case this requires little modification --- one applies the CPT once to the holomorphic continuations of all polynomials appearing in the definitions of all relevant semialgebraic sets, in some suitable polydisc.
However, in the more general analytic setting, it is not clear that a collection of (definable) analytic hypersurfaces of complex cells may be continued (definably) to some shared cell.

The constructions of \Cref{sec: real parameterization} resolve these difficulties, with \Cref{thm: real parameterization} giving a polynomially effective version of~\cite[Corollaries 34 and 35]{BinyaminiNovikov2019} for a finite collection of sets definable in an analytically generated structure.

\subsection{Acknowledgements}
I wish to thank Gal Binyamini and Dmitry Novikov for many helpful discussions.

\section{Analytically generated structures}
\label{sec: ag structures}
In order to apply the complex-geometric results of~\cite{BinyaminiCarmonNovikovComplexCells} to real sets, we consider the following class of o-minimal structures (cf. \cite[Section 3.3]{BinyaminiNovikov2023} and~\cite[Section 6]{Binyamini2024}).
We recall first that we call a set $Z\subset\cComplex^\ell$ \emph{symmetric} if it is invariant under coordinate-wise complex conjugation.
We also follow the convention in~\cite[Remark 2.22]{BinyaminiCarmonNovikovComplexCells}, where the complexity of a complex cell is measured in terms of the complexities of the graphs of the holomorphic functions determining it, rather than its complexity as a definable set.

\begin{definition}[Analytically generated substructure]
\label{def: ag structure}
Let $\{\FD[\format][\degree]\}$ be a \so-minimal structure.
Let $\an{\structure}$ be the structure generated by the collection of sets of the form $Z\cap \rReal\cell$ where $\cell$ is a real complex cell and $Z\subset\ext{\cell}{1/2}$ is a symmetric analytic hypersurface, both of them definable in $\structure$.

We equip $\an{\structure}$ with the following FD-filtration.
Let $\ext{\cell}{1/2}$ be a real complex cell and let $Z\subset\ext{\cell}{1/2}$ be a symmetric analytic hypersurface, both of them in $\FD[\format][\degree]$.
We assign the set $Z\cap \rReal\cell$ format $\format$ and degree $\degree$.
The FD-filtration on $\an{\structure}$ is then sharply generated (see~\cite[Definition 2.3]{BinyaminiCarmonNovikovComplexCells}) from these initial data 
(in particular, algebraic hypersurfaces with their usual complexity are included in the resulting filtration).
We denote the resulting structure and FD-filtration by $\{\an{\structure}_{\format,\degree}\}$ and say that it is \emph{analytically generated} by~$\{\FD[\format][\degree]\}$.
\end{definition}

\begin{remark}\label{rem: algebraic hypersurfaces}
	In the construction of \Cref{def: ag structure}, it is not crucial to explicitly add algebraic hypersurfaces to the filtration $\{\an{\structure}_{\format,\degree}\}$ since one may express any such hypersurface using sets of the form $Z\cap \rReal\cell$ as follows. 

	Consider $\{P=0\}$ for a polynomial $P\in\rReal[x_1,\dots,x_n]$.
	The relevant hypersurfaces $Z$ are obtained from the set of complex zeros of $P$ in $\cComplex^n$, where we split each $\cComplex$ factor as $\cComplex=\disc[2]\cup\puncDisc*[1]$ (cf.~\cite[Remark 2.33]{BinyaminiCarmonNovikovComplexCells}).
	Each of the resulting $2^n$ real complex cells admits a $1/2$-extension, and the complex zeros of $P$ form a symmetric analytic hypersurface of each of these extensions.
	
	The complexity of $\{P=0\}$ obtained in this way is slightly larger than required by the axioms of \so-minimality, but this technicality may be ignored by~\cite[Lemma 5.9]{BinyaminiNovikovZak2022}.
\end{remark}

\begin{remark}\label{rem: construction sequence}
	We recall that membership of a set $X\in\structure$ in one of the collections $\an{\structure}_{\format,\degree}$ corresponds to the existence of a \emph{construction sequence} expressing $X$ by iterated boolean operations and projections of algebraic hypersurfaces and of sets of the form $Z\cap \rReal\cell$ as above (by \Cref{rem: algebraic hypersurfaces}, it is enough to consider only the latter), with the complexities of these basic sets and the number of operations controlled by $\format$ and $\degree$ according to axioms (\s1) -- (\s4) in~\cite[Definition 2.3]{BinyaminiCarmonNovikovComplexCells}.
	
	We note that a set $X\in\structure$ might have lower complexity with respect to the original filtration $\{\FD[\format][\degree]\}$ than with respect to the filtration $\{\an{\structure}_{\format,\degree}\}$.
	Using the format and degree of $X$ as determined by the filtration $\{\an{\structure}_{\format,\degree}\}$ ensures that a set of low complexity admits a ``simple'' construction sequence. 
\end{remark}

In \Cref{sec: real parameterization} we show that, for a \so-minimal structure with \s CD, the construction in \Cref{def: ag structure} above is idempotent up to equivalence of FD-filtrations (see \Cref{cor: analytic generation idempotence}).
Accordingly, we make the following definition.
\begin{definition}
	\label{def: analytically generated structure}
	A \so-minimal structure $\{\FD[\format][\degree]\}$ is \emph{analytically generated} if it is equivalent to $\{\an\structure_{\format,\degree}\}$.
\end{definition}

We finish this section with several propositions which will be useful later.
Fix a \so-minimal structure $\{\FD[\format][\degree]\}$.

\begin{proposition}
	\label{prop: ag is sharp}
	The structure $\{\an\structure_{\format,\degree}\}$ is reducible to $\{\FD[\format][\degree]\}$.
	In particular, $\{\an\structure_{\format,\degree}\}$ is \so-minimal.
\end{proposition}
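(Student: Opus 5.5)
Because the filtration $\{\an\structure_{\format,\degree}\}$ is sharply generated from its initial data, reducibility to $\{\FD[\format][\degree]\}$ comes down to a single check: each generating set, with its assigned format $\format$ and degree $\degree$, must already lie in $\FD[\format'][\degree']$ with $\format'=\order[\format]{1}$ and $\degree'=\polyfd$. Granted this, I would induct along construction sequences (\Cref{rem: construction sequence}). The sharply generated filtration is, by~\cite[Definition 2.3]{BinyaminiCarmonNovikovComplexCells}, the smallest filtration containing the initial data and closed under (\s1)--(\s4). Since $\{\FD[\format][\degree]\}$ satisfies these same axioms with polynomial bounds, the rescaled filtration $\{\FD[\order[\format]{1}][\polyfd]\}$ contains the initial data and is closed under the same operations with the same type of bounds. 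Minimality then gives $\an\structure_{\format,\degree}\subset\FD[\order[\format]{1}][\polyfd]$. One can also see this directly by induction on the length of a construction sequence. Either way we get reducibility in the sense of~\cite[Definition 2.8]{BinyaminiCarmonNovikovComplexCells}, and \so-minimality of $\an\structure$ follows at once, since the \so-minimality axioms transfer along a reduction of FD-filtrations.

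The real content is the base case. Let $\ext{\cell}{1/2}$ be a real complex cell and $Z\subset\ext{\cell}{1/2}$ a symmetric analytic hypersurface, both in $\FD[\format][\degree]$; here, by the convention of~\cite[Remark 2.22]{BinyaminiCarmonNovikovComplexCells}, complexity is measured through the graphs of the defining functions. I would proceed in three steps.

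\begin{enumerate}[label=(\roman*)]
\item Show that the cell $\ext{\cell}{1/2}$, as a subset of $\cComplex^\ell\simeq\rReal^{2\ell}$, lies in $\FD[\order[\format]{1}][\polyfd]$. The cell is built fibrewise from discs, annuli and punctured discs whose radii are given by the functions $r_j$ and the center functions, and each fibre condition is a polynomial inequality in the coordinates and the values of these functions. So the cell is obtained from the graphs by $\order{\ell}$ boolean operations, products and projections, each of which costs only polynomially.
\item Show that $\rReal\cell$, the intersection of $\cell$ with the real subspace $\rReal^\ell\subset\cComplex^\ell$, is in $\FD[\order[\format]{1}][\polyfd]$. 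The real subspace is a linear subspace $\{\operatorname{Im} z_i=0\}$, hence algebraic of degree $1$, and the intersection costs polynomially.
\item Intersect with $Z$, which lies in $\FD[\format][\degree]$ by hypothesis, to get $Z\cap\rReal\cell\in\FD[\order[\format]{1}][\polyfd]$.
\end{enumerate}

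Symmetry of $Z$ plays no role for the complexity bound. It is needed only to make sense of the data, namely that $Z\cap\rReal\cell$ is a real set of the expected kind. Algebraic hypersurfaces are handled by the axioms of $\{\FD[\format][\degree]\}$ directly, or via \Cref{rem: algebraic hypersurfaces} together with~\cite[Lemma 5.9]{BinyaminiNovikovZak2022}.

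I expect step (i) to be the main obstacle, because the graph-based measure of complexity for cells must be translated into complexity of the cell as a definable set. I would handle it by induction on $\ell$, using the relations between the complexity of a cell and that of its defining functions recorded in~\cite[Section 2]{BinyaminiCarmonNovikovComplexCells}. At each stage the fibre over the base is cut out by finitely many inequalities of the form $|z_{j}-c(z_{1..j-1})|<r(z_{1..j-1})$ or $>r'(z_{1..j-1})$, which are expressed with existential quantifiers over the graphs. This costs a bounded increase in format and a polynomial increase in degree per step, for a total of $\order[\format]{1}$ and $\polyfd$.
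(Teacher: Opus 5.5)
Your proposal is correct and takes essentially the same route as the paper. The generators $Z\cap\rReal\cell$ already lie in $\structure$ with only a bounded increase in format and degree, so the sharply generated filtration sits inside $\FD*[\format][\degree]$, and the component bound needed for \so-minimality is inherited from $\structure$. Your steps (i)--(iii) just spell out the base case that the paper calls immediate, namely the definability of the cell itself and the intersection with the real subspace.
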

\begin{proof}
	This follows immediately from the construction in \Cref{def: ag structure}.
	Indeed, since the definable sets and FD-filtration of $\{\an\structure_{\format,\degree}\}$ are generated by sets in $\{\FD[\format][\degree]\}$ and their complexities up to a constant increase in format and degree due to intersecting with $\rReal^n$ for an appropriate $n$, we have that~$\an\structure_{\format,\degree} \subset \FD*[\format][\degree]$.

	Now if $S\in \an\structure_{\format,\degree}$, then $S \in \FD*[\format][\degree]$ and so $S$ has at most $\polyfd$ connected components.
	In particular, $\{\an\structure_{\format,\degree}\}$ is \so-minimal.
\end{proof}

\begin{proposition}
	\label{prop: real graphs definable}
	Let $\ext{\cell}{1/2}\subset\cComplex^\ell$ be a real complex cell and let $f:\ext{\cell}{1/2}\to\cComplex$ be a real holomorphic map, both of them in $\FD[\format][\degree]$.
	Then $\graph{\restrict{f}{\rRealPos\cell}}$, as a subset of $\rReal^\ell\times\rReal$, is in $\an{\structure}_{\order[\format]{1},\polyfd}$.
\end{proposition}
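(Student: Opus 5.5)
The plan is to exhibit the graph as a set of the form $Z\cap\rReal\cell'$ for a real complex cell $\cell'$ and a symmetric analytic hypersurface $Z\subset\ext{\cell'}{1/2}$, both in $\FD*[\format][\degree]$, and then cut out the positive real part by sign conditions on the coordinates.

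First I will build the cell $\cell'$ by adding a fiber over $\cell$. The natural choice is $\cell'=\cell\odot\disc[R]$ for a suitable radius $R$. However $f$ is only holomorphic, not bounded on $\ext{\cell}{1/2}$, so no single disc fiber is guaranteed to contain the values. I would therefore split the fiber coordinate as in \Cref{rem: algebraic hypersurfaces}, writing $\cComplex=\disc[2]\cup\puncDisc*[1]$. This gives the two real complex cells $\cell\odot\disc[1]$ and $\cell\odot\puncDisc*[2]$, each with a $1/2$-extension $\ext{\cell}{1/2}\odot\disc[2]$, respectively $\ext{\cell}{1/2}\odot\puncDisc*[1]$. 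Since $\puncDisc*$ contains $\infty$, I will handle the second piece in the chart $w=1/y$, or simply use the cell $\puncDisc*$ in the coordinate $y$ as in the cited remark. These cells are real because $\cell$ is real and the fibers are symmetric discs. Their complexity is that of $\cell$ plus $O(1)$, hence in $\FD*[\format][\degree]$.

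Next, in each piece I take $Z=\{(z,y): y=f(z)\}\cap\ext{\cell'}{1/2}$. This is the zero set of the holomorphic function $y-f(z)$; on the $\puncDisc*$ piece it is the zero set of $1-wf(z)$. So $Z$ is an analytic hypersurface, and it is definable in $\structure$ with complexity $\order[\format]{1},\polyfd$, since the graph of $f$ is. Because $f$ is real, meaning $f(\bar z)=\overline{f(z)}$, the set $Z$ is symmetric. The intersection $Z\cap\rReal\cell'$ is then exactly the graph of $f$ over $\rReal\cell$ with values in the real part of the fiber. Taking the union of the two pieces, together with the point $\infty$ being excluded since $f$ is finite, recovers $\graph{\restrict{f}{\rReal\cell}}$.

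Finally, I intersect with the set $\rRealPos\cell$, which is described inside $\rReal\cell$ by sign conditions on coordinates, namely the basic sets $\{x_i>0\}$ and similar semialgebraic sets. Boolean operations of the sharply generated filtration keep the complexity within $\an{\structure}_{\order[\format]{1},\polyfd}$.

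The main obstacle is the fiber in the second step. The definition only admits hypersurfaces of a $1/2$-extension of a real complex cell, so I must check that the chosen cells genuinely admit $1/2$-extensions with $\cComplex$ covered. I also need the convention that $\rReal\cell'$ for a $\puncDisc*$ fiber consists of the real points of modulus greater than $1$ (or the real $w$ with small modulus), so that the union really is the full real graph. I would resolve this by copying the splitting argument of \Cref{rem: algebraic hypersurfaces} verbatim, applied fiberwise.
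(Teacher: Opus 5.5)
Your overall strategy matches the paper's: realize the graph as $Z\cap\mathbb{R}\mathcal{C}'$ for the symmetric hypersurface $Z=\{y=f(z)\}$ of the $1/2$-extension of a cell $\mathcal{C}'$ over $\mathcal{C}$, then intersect with the positive orthant. The difference is that the paper does no splitting. It takes $\mathcal{C}'=\mathcal{C}\odot\mathbb{C}$, whose $1/2$-extension is $\mathcal{C}^{1/2}\odot\mathbb{C}$. The unboundedness of $f$ is then irrelevant, and $\{z_{\ell+1}-f(z_1,\dots,z_\ell)=0\}$ is directly a symmetric analytic hypersurface of it. The proof ends by intersecting with the semialgebraic set $\mathbb{R}_+^\ell\times\mathbb{R}$.

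Your splitting route can work, but as written the covering step fails. In $Z\cap\mathbb{R}\mathcal{C}'$ it is the real part of the cell $\mathcal{C}'$, not of its extension, that must exhaust the real fibre. You chose the cells $\mathcal{C}\odot D(1)$ and $\mathcal{C}\odot D_\infty(2)$ so that their $1/2$-extensions are $D(2)$ and $D_\infty(1)$. But then $\mathbb{R}D(1)\cup\mathbb{R}D_\infty(2)=\{|y|<1\}\cup\{|y|>2\}$. Every point of the graph with $1\le|f(x)|\le 2$ is lost, so the union of your two pieces is not the graph. Your later "convention" that the real part of the $D_\infty$ piece is $\{|y|>1\}$ conflates the cell with its extension.

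The remark you cite splits $\mathbb{C}=D(2)\cup D_\infty(1)$ with the cells themselves equal to $D(2)$ and $D_\infty(1)$, whose $1/2$-extensions are $D(4)$ and $D_\infty(1/2)$. With that choice, or with the single fibre $\mathbb{C}$ as in the paper, your argument goes through.

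The detour through the chart $w=1/y$ is also unnecessary. Since $D_\infty(1)\subset\mathbb{C}$, the function $y-f(z)$ is already holomorphic on the extended cell.
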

\begin{proof}
	The intersection of $\ext*{\cell\odot\cComplex}{1/2}$ with the graph of $f$ is the symmetric analytic hypersurface given by $\qty{\varZ_{\ell+1}-f(\initial{\varZ}{1}{\ell})=0}$.
	The claim then follows by intersecting with the semialgebraic set $\rRealPos^\ell\times\rReal$.
\end{proof}
By considering the components of a real cellular map, we have the following immediate corollary.
\begin{corollary}
	\label{cor: images of real cellular maps}
	Let $\ext{\cell}{1/2}\subset\cComplex^\ell$ be a real complex cell and let $f:\ext{\cell}{1/2}\to\cComplex^\ell$ be a real cellular map, both of them in $\FD[\format][\degree]$.
	Then the graph and image of $\restrict{f}{\rRealPos\cell}$ in $\rReal^\ell\times\rReal^\ell$ and $\rReal^\ell$, respectively, are in $\an{\structure}_{\order[\format]{1},\polyfd}$.
\end{corollary}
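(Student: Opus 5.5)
The plan is to reduce everything to \Cref{prop: real graphs definable}, applied coordinate by coordinate. A real cellular map $f=(f_1,\dots,f_\ell):\ext{\cell}{1/2}\to\cComplex^\ell$ is triangular: each component $f_k$ is a real holomorphic function on the projection $\ext{\cell_{1..k}}{1/2}$, and hence also on $\ext{\cell}{1/2}$ after composing with the coordinate projection. Since $f$ lies in $\FD[\format][\degree]$, each $f_k$ lies in $\FD*[\format][\degree]$, because its graph is obtained from the graph of $f$ by a projection.

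\textbf{Graphs of the components.} I will apply \Cref{prop: real graphs definable} to each $f_k$ regarded as a function on the whole cell $\ext{\cell}{1/2}$. This gives that
\[
G_k:=\graph{\restrict{f_k}{\rRealPos\cell}}\subset\rReal^\ell\times\rReal
\]
belongs to $\an{\structure}_{\order[\format]{1},\polyfd}$. Because $f$ is real, $f_k$ maps $\rRealPos\cell$ into $\rReal$, so $G_k$ is a genuine real graph.

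\textbf{Graph of $f$.} The graph of $\restrict{f}{\rRealPos\cell}$ in $\rReal^\ell\times\rReal^\ell$ is the intersection over $k=1,\dots,\ell$ of the sets
\[
\{(\varX,\mathbf{y})\in\rReal^\ell\times\rReal^\ell : (\varX,y_k)\in G_k\}.
\]
Each of these sets is the preimage of $G_k$ under a coordinate projection, that is, a product with $\rReal^{\ell-1}$ followed by a permutation of coordinates. By the axioms (\s1)--(\s4), such products and permutations raise the format by $\order{1}$ and keep the degree polynomial.

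There are $\ell$ intersections, and $\ell\le\format$. Since intersections add degrees and raise the format by a constant, the result stays in $\an{\structure}_{\order[\format]{1},\polyfd}$.

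\textbf{Image of $f$.} The image of $\restrict{f}{\rRealPos\cell}$ is the projection of the graph to the $\mathbf{y}$ coordinates. This is again controlled by the projection axiom, with no increase in degree.

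The only point needing care is bookkeeping: the format must stay $\order[\format]{1}$ even though there are $\ell$ intersections. Here I will use $\ell\le\format$ to absorb the $\ell$-dependence, in the same way as in the proof of \Cref{prop: ag is sharp}.
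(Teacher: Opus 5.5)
Your proposal is correct and is essentially the paper's own argument: the paper deduces this corollary immediately from \Cref{prop: real graphs definable} by considering the components of the real cellular map, and your step of intersecting the component graphs and then projecting just writes that bookkeeping out. The coordinate rearrangements you use are harmless, since they can be realized by products with $\rReal$, intersections with diagonal hyperplanes and projections as in the proof of \Cref{lem: absorb product with R}. This costs $O(\ell)$ in format, which is absorbed because $\ell$ is bounded in terms of the format.
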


\begin{corollary}
	Let $\hExt{\cell}{\rho}$ be a real complex cell of length $\ell$ and let $Z\subset\hExt{\cell}{\rho}$ be a symmetric analytic hypersurface, both of them in $\FD[\format][\degree]$.
	Then $Z\cap\rReal\cell$ is in $\an{\structure}_{\order[\format]{1},\polyfd[\format][\degree,\rho]}$.
\end{corollary}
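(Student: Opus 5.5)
The idea is to reduce to \Cref{def: ag structure}, where a $1/2$-extension is required, by covering $\rReal\cell$ with a controlled number of real complex cells. Each of these will be a subcell of $\hExt{\cell}{\rho}$ and will admit a $1/2$-extension inside $\hExt{\cell}{\rho}$. Restricting $Z$ to each such extension gives a symmetric analytic hypersurface of it, so each piece $Z\cap\rReal\cell'$ is a generating set of $\an{\structure}$. The set $Z\cap\rReal\cell$ is then the finite union of these pieces, which costs only the number of pieces in degree and $O(1)$ in format, by the axioms of sharp generation.

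For the covering, I would use the refinement results for complex cells from \cite{BinyaminiCarmonNovikovComplexCells} (the ``$\rho\to\sigma$'' refinement theorem for extensions, the \s CPT-type cellular cover). These cover $\cell$, for a cell admitting a $\{\rho\}$-extension, by $\poly_\ell(\rho)$ cellular images of cells admitting $1/2$-extensions (equivalently $\{\sigma\}$-extensions for a fixed $\sigma$), with each map and cell in $\FD[\order[\format]{1}][\polyfd[\format][\degree,\rho]]$. The bound $\poly_\ell(\rho)$ follows the standard estimate: discs and annuli of hyperbolic extension $\rho$ are covered by $O(1/\rho\cdot\log(1/\rho))$ discs/annuli with $1/2$-extension, and one inducts on the fibres. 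I also need these to be \emph{real} cellular maps of real cells, and the real parts $\rRealPos\cell_j$ to map onto $\rReal\cell$. For the real refinement I would choose the centres of the covering discs on the real axis and the annuli symmetric, so that each covering cell is real and the union of the real parts covers $\rReal\cell$.

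Given real cellular maps $f_j:\ext{\cell_j}{1/2}\to\hExt{\cell}{\rho}$, I set $Z_j=f_j^{-1}(Z)$. This is a symmetric analytic hypersurface in $\ext{\cell_j}{1/2}$ (or all of it, in which case $f_j(\ext{\cell_j}{1/2})$ lies in $Z$; I would treat that degenerate case separately). Then
\begin{equation*}
Z\cap\rReal\cell=\bigcup_j f_j(Z_j\cap\rRealPos\cell_j)\cup(\text{sign-reflected parts}).
\end{equation*}
Each $Z_j\cap\rReal\cell_j$ is in $\an{\structure}_{\order[\format]{1},\polyfd[\format][\degree,\rho]}$ by definition. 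Its image under $\restrict{f_j}{\rRealPos\cell_j}$ is a projection of the intersection of $(Z_j\cap\rReal\cell_j)\times\rReal^\ell$ with the graph from \Cref{cor: images of real cellular maps}, so it stays within the claimed complexity. Taking the union over $\poly_\ell(\rho)$ indices finishes the argument.

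The main obstacle is the covering step. I must make sure the refinement from $\{\rho\}$ to a $1/2$-extension is available as real cells, with real maps and a real-part cover, and with complexity polynomial in $\rho$ (rather than just $O(1)$ in $\rho$). If only the complex statement is available, I would rerun its fibrewise induction, picking real centres at each stage. Here a $1/\rho$-type count of real covering points suffices, since the real trace of each fibre is a union of at most two intervals.
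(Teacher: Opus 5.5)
Your proposal is correct and follows essentially the same route as the paper. You apply the sharp refinement theorem \cite[Theorem 2.30]{BinyaminiCarmonNovikovComplexCells} to get a real cellular cover $\{f_j:\ext{\cell_j}{1/2}\to\hExt{\cell}{\rho}\}$ of size $\poly_\ell(\rho)$, note that each $\pullback{f_j}Z\cap\rRealPos\cell_j$ is a generating set of $\an{\structure}$, and push it forward using \Cref{cor: images of real cellular maps} together with $f_j(\rRealPos\cell_j)\subset\rReal\cell$. The obstacle you flag does not arise: that theorem already yields real cellular maps for real cells whose positive real parts cover $\rReal\cell$, because the signs $\pm\varZ_k^{q}$ are built into real cellular maps, so no separate ``sign-reflected parts'' are needed.
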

\begin{proof}
	Using the sharp refinement theorem \cite[Theorem 2.30]{BinyaminiCarmonNovikovComplexCells}, we obtain a real cellular cover $\qty{f_j:\ext{\cell_j}{1/2}\to\hExt{\cell}{\rho}}$ of size $\polyl\qty(\rho)$ such that each $f_j$ is in $\FD*[\format][\degree]$.
	For each of the pullbacks $\pullback{f}_jZ$, we have that $\pullback{f}_jZ\cap \rRealPos\cell_j$ is in $\an{\structure}_{\order[\format]{1},\polyfd}$.
	The result now follow from \Cref{cor: images of real cellular maps}, since we have $f_j(\rRealPos\cell_j)\subset\rReal\cell$. 
\end{proof}
By the same reasoning as for \Cref{cor: images of real cellular maps}, we now have the following.
\begin{corollary}\label{cor: rho extension complexity}
	Let $\hExt{\cell}{\rho}\subset\cComplex^\ell$ be a real complex cell and let $f:\hExt{\cell}{\rho}\to\cComplex^\ell$ be a real cellular map, both of them in $\FD[\format][\degree]$.
	Then the graph and image of $\restrict{f}{\rRealPos\cell}$ in $\rReal^\ell\times\rReal^\ell$ and $\rReal^\ell$, respectively, are in $\an{\structure}_{\order[\format]{1},\polyfd[\format][\degree,\rho]}$.
\end{corollary}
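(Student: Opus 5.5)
The plan is to reduce to \Cref{cor: images of real cellular maps}, following the proof of the preceding corollary. First, apply the sharp refinement theorem \cite[Theorem 2.30]{BinyaminiCarmonNovikovComplexCells} to the real complex cell $\hExt{\cell}{\rho}$. This gives a real cellular cover
\[
\qty{g_j:\ext{\cell_j}{1/2}\to\hExt{\cell}{\rho}}
\]
of size $\polyl\qty(\rho)$. Each $g_j$ is in $\FD*[\format][\degree]$, its image is contained in $\hExt{\cell}{\rho}$, and together the $g_j(\rRealPos\cell_j)$ cover $\rRealPos\cell$. Since $g_j$ is real, we have $g_j(\rRealPos\cell_j)\subset\rReal\cell$. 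Here I rely on the refinement being real and covering the positive real part, exactly as used in the previous corollary.

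Next, for each $j$, consider the composition $f\comp g_j:\ext{\cell_j}{1/2}\to\cComplex^\ell$. Real cellular maps compose, so this is a real cellular map, and it lies in $\FD*[\format][\degree]$ by the usual complexity bounds for composition in a \so-minimal structure. \Cref{cor: images of real cellular maps} then gives two facts: the image $(f\comp g_j)(\rRealPos\cell_j)$ is in $\an{\structure}_{\order[\format]{1},\polyfd}$, and so is the graph of $g_j$ restricted to $\rRealPos\cell_j$. Composing the two graphs (a fibre product followed by a projection) shows that the graph of $f\comp g_j$ over $\rRealPos\cell_j$, and hence the set $\graph\restrict{f}{g_j(\rRealPos\cell_j)}$, is in the same class.

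Then, the graph of $\restrict{f}{\rRealPos\cell}$ is obtained as follows. Take the union over $j$ of the sets $\graph\restrict{f}{g_j(\rRealPos\cell_j)}$ and intersect it with $\rRealPos\cell\times\rReal^\ell$. The set $\rRealPos\cell$ is itself in the class by \Cref{cor: images of real cellular maps}, applied to the identity on a $1/2$-extension after refinement. The image of $\restrict{f}{\rRealPos\cell}$ is the projection of this graph. A union of $\polyl(\rho)$ sets costs only $\polyfd[\format][\degree,\rho]$ in degree, with no change in format, by the \s-axioms. This gives the claimed complexity.

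The main obstacle is the bookkeeping at two points. The first is making sure the refinement really covers the whole of $\rRealPos\cell$ by the images of the real positive parts $\rRealPos\cell_j$. The second is checking that the real-part restriction of the composition matches the real-part restriction of $f$ on the corresponding pieces. If the refinement covers only $\rReal\cell$ up to sign symmetries, I would additionally compose with coordinate reflections, which are semialgebraic and cost $O(1)$.
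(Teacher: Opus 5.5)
Your proposal is correct and is essentially the paper's argument. The paper gets the statement ``by the same reasoning'' as \Cref{cor: images of real cellular maps}: it views the graph of each component of $f$ as the symmetric hypersurface $\{\varZ_{\ell+1}=f_k(\initial{\varZ}{1}{\ell})\}$ of $\hExt*{\cell\odot\cComplex}{\rho}$ and invokes the preceding corollary on $Z\cap\rReal\cell$, whose proof is exactly your sharp-refinement-and-push-forward step; you instead unfold that refinement directly for $f$ and glue the graphs by fibre products. One bookkeeping point needs tidying. The union $\bigcup_j g_j(\rRealPos\cell_j)$ is only known to lie between $\rRealPos\cell$ and $\rReal\cell$, so applying \Cref{cor: images of real cellular maps} to the identity composed with the $g_j$ does not by itself produce $\rRealPos\cell$. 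Instead, cut the union of graphs down by intersecting with the semialgebraic set $\rRealPos^\ell\times\rReal^\ell$, as in \Cref{prop: real graphs definable}.
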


Finally, as in~\cite[Section 8]{Binyamini2024}, the next proposition shows that the real complex cells in $\structure$ themselves (rather than just their real parts) are definable in $\an{\structure}$, under the identification $\cComplex^\ell\cong\rReal^{2\ell}$.

\begin{proposition}
	\label{prop: cells are definable}
Let $\hExt{\cell}{\rho}\subset\cComplex^\ell$ be a real complex cell and $f:\hExt{\cell}{\rho}\to\cComplex$ be a real holomorphic function, both of them in $\FD[\format][\degree]$.
Then the graph of $\restrict{f}{\cell}$, as a subset of $\cComplex^\ell\times\cComplex$, is in $\an{\structure}_{\order[\format]{1},\polyfd[\format][\degree,\rho]}$.
\end{proposition}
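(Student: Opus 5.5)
We need the graph of $f$ over the whole complex cell $\cell$ (not just its real part) to be definable in $\an{\structure}$ under $\cComplex^\ell\cong\rReal^{2\ell}$. The idea, following~\cite[Section 8]{Binyamini2024}, is to write the real and imaginary parts of $f$ as real-analytic functions of $2\ell$ real variables, and to realize them as restrictions of holomorphic functions on a real complex cell of length $2\ell$. Concretely, write $z_k=x_k+iy_k$ and consider the complexified variables $(x,y)\in\cComplex^{2\ell}$. Define
\begin{equation*}
	F_\pm(x,y)=\tfrac12\qty(f(x+iy)\pm \overline{f(\bar x+i\bar y)}),
\end{equation*}
which are holomorphic in $(x,y)$ wherever $x+iy$ and $\bar x+i\bar y$ both lie in $\hExt{\cell}{\rho}$. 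By construction $F_+=\Re f$ and $F_-=i\Im f$ on real points. Since $f$ is real, $\overline{f(\bar w)}=f(w)$, so $F_\pm$ can be written as $\tfrac12(f(x+iy)\pm f(x-iy))$. Thus both are compositions of $f$ with the linear maps $(x,y)\mapsto x\pm iy$, and they are real holomorphic.

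The main step, and the main obstacle, is to find a domain in $\cComplex^{2\ell}$ that is a union of a controlled number of real complex cells admitting $1/2$-extensions, that maps into $\hExt{\cell}{\rho}$ under both maps $x\pm iy$, and whose real part covers $\cell$ (identified with a subset of $\rReal^{2\ell}$). Because of the triangular, fibered structure of cells (discs, annuli and punctured discs whose radii depend holomorphically on earlier coordinates), $\cell$ itself is not of this form. My plan is as follows. First apply the sharp refinement theorem \cite[Theorem 2.30]{BinyaminiCarmonNovikovComplexCells} to cover $\hExt{\cell}{\rho}$ by $\polyl(\rho)$ cellular images whose fibres are small compared to the extension. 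Then, for each real point $(x,y)$ of $\cell$, take a polydisc-type cell of length $2\ell$ centred appropriately. The sizes should be governed by the distance to the boundary, which in an extension is comparable to the fibre radii, and the complexified box should map into $\hExt{\cell}{\rho}$ under $x\pm iy$. I would build these cells fibrewise by induction on $\ell$. At step $k$, the fibre of $\cell$ over $z_{1..k-1}$ is a disc, annulus or punctured disc. In real coordinates $(x_k,y_k)$ this is a semialgebraic region, and we cover it by $O(1)$ (or $\poly(\rho)$) real complex cells in two variables. For an annulus we pass to logarithmic/polar coordinates as in the covering $\cComplex=\disc[2]\cup\puncDisc*[1]$ used in \Cref{rem: algebraic hypersurfaces}. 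The radii of these cells are holomorphic functions of the earlier coordinates, obtained from the radii of $\cell$ by composing with $x\pm iy$. A fixed ratio $\rho$-extension yields the $1/2$-extension margin after shrinking by a constant factor.

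Once this cover is available, the remaining steps are routine. On each of the length-$2\ell$ cells $\ext{\cell'}{1/2}$, the functions $F_\pm$ are real holomorphic in $\FD*[\format][\degree,\rho]$. By \Cref{prop: real graphs definable}, the graphs of $\restrict{F_\pm}{\rRealPos\cell'}$ are in $\an{\structure}_{\order[\format]{1},\polyfd[\format][\degree,\rho]}$. If needed, use \Cref{cor: rho extension complexity} to handle the real part rather than only the positive part, by composing with sign flips. The set $\cell$ itself, viewed in $\rReal^{2\ell}$, is the union of the real parts of these cells. Its defining inequalities $|z_k|<r(z_{1..k-1})$ involve the real holomorphic radii, which are again handled by the same argument by induction on $\ell$. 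Finally, the graph of $\restrict f\cell$ is obtained from the graphs of $F_+$ and $F_-$ by intersecting with $\cell$ and taking a finite union. These are boolean operations on $\poly(\rho)$ sets of bounded format, which gives the stated complexity.
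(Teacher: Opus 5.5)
\paragraph{The key covering step is not proved.} Your algebraic reduction is fine. For real $f$, the functions $\Re f(x+iy)=\tfrac12\qty(f(x+iy)+f(x-iy))$ and $\Im f(x+iy)=\tfrac1{2i}\qty(f(x+iy)-f(x-iy))$ are real holomorphic in the complexified variables $(x,y)$. Note that it is $-iF_-$, not $F_-$, that is real.

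Suppose you had polynomially many real complex cells of length $2\ell$ with the following properties: their $1/2$-extensions are mapped into $\hExt{\cell}{\rho}$ by both $x+iy$ and $x-iy$, and their real parts cover $\cell\subset\rReal^{2\ell}$. Then \Cref{prop: real graphs definable} and a boolean combination would finish the proof. But that cover is the entire content of the proposition, and you only sketch it.

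The pointwise version of your sketch (a polydisc around each real point) is exactly the non-effective subanalytic argument described in \Cref{sec: difference}.

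The fibrewise version does not work as stated, for three reasons.
\begin{itemize}
\item The new radii are \emph{not} compositions of the radii of $\cell$ with $x\pm iy$. The fibre condition is $x_k^2+y_k^2<|r(z')|^2$, and $|r(z')|=\sqrt{r(x'+iy')\,r(x'-iy')}$ needs a univalued branch on the complexified base. Already for $r(z_1)=z_1$ this is $\sqrt{x_1^2+y_1^2}$, which branches along $x_1=\pm iy_1$, at distance comparable to $|z_1|$ from the real points.
\item Containment in $\hExt{\cell}{\rho}$ requires $|r(x'+iy')|$ and $|r(x'-iy')|$ to be comparable. This forces the complexified base cells to be thin in a way dictated by how $r$ degenerates.
\item Cartesian cells centred at $0$ cannot handle a punctured-disc fibre at all. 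The $y_k$-fibre of such a cell is rotation invariant, so if a real point $(a,a)$ with $a\neq 0$ lies in it, then so does $(a,ia)$. There $x_k+iy_k=0$, a point excluded from the corresponding fibre of $\hExt{\cell}{\rho}$.
\end{itemize}
So the claimed $O(1)$ or $\poly(\rho)$ count is unsupported.

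\paragraph{The missing ingredient.} The paper imports it via \cite[Proposition 48]{Binyamini2024}: monomial cells, as in \cite[Section 6.2]{BinyaminiNovikov2019}. After the refinement theorem and the monomialization lemma (\Cref{lem: monomialization}), the radii are controlled by monomials $c\,z^{\alpha}$ with $|\alpha|$ polynomially bounded. For a cell with monomial radii, three things hold:
\begin{itemize}
\item the defining inequalities $|z_k|<|c|\,|z^{\alpha}|$ are semialgebraic in $\rReal^{2\ell}$;
\item they are invariant under coordinatewise rotations;
\item a small complexification of the angular variables distorts $|z^{\alpha}|$ by a factor controlled by $|\alpha|$.
\end{itemize}
This is precisely the quantitative control needed to write down polynomially many length-$2\ell$ cells whose extensions land in $\hExt{\cell}{\rho}$ under both $x\pm iy$. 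Such cells are naturally written in polar rather than Cartesian coordinates, and the complexity checks are supplied by \Cref{prop: real graphs definable} and its corollaries. Without a device of this kind, your main step remains a plan rather than a proof.
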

\begin{proof}
	The proof is the same as that of~\cite[Proposition 48]{Binyamini2024}.
	The technical verifications needed to carry out the arguments in loc.\ cit.\ (for example, the use of monomial cells as in~\cite[Section 6.2]{BinyaminiNovikov2019}) follow from the previous propositions of this section. 
\end{proof}

\section{Parametrization of real sets}
\label{sec: real parameterization}
The main result of this section, \Cref{thm: real parameterization}, is the required analog of~\cite[Corollary 34]{BinyaminiNovikov2019} needed to deduce our main theorems, \Cref{thm: sharp yomdin gromov,thm: Wilkie conjecture,thm: S-preparation} (see \Cref{def: cylindrical collection} for the notion of a cylindrical collection of cellular maps).
\begin{theorem}
	\label{thm: real parameterization}
	Let $\{\FD[\format][\degree]\}$ be a \so-minimal structure with \s CD.
	Let $S_1,\dots,S_k\subset \rReal^n $ be sets in $\an{\structure}_{\format,\degree}$.
	Then there exists a cylindrical collection of prepared real cellular maps $\{f_j:\hExt{\cell_j}{\sigma}\to\cComplex^n\}$ of size $\polyfd[\format][\degree,k,1/\sigma]$, where each $f_j$ is in ${\structure}_{\order[\format]{1},\polyfd}$, the restrictions $\{\restrict{f_j}{\rRealPos\hExt{\cell_j}{\sigma}}\}$ are compatible with each $S_i$ and we have that $\rReal^n\subset \bigcup_j f_j(\rRealPos\cell_j)$.
\end{theorem}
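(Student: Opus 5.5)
We argue by induction on the length of the construction sequence of the sets $S_i$ (\Cref{rem: construction sequence}), keeping track of format and degree through axioms (\s1)--(\s4). The key point is to reduce everything to the basic sets $Z\cap\rReal\cell$ and then handle boolean operations and projections at the level of cellular covers. Concretely, we prove the stronger statement that for sets $S_1,\dots,S_k$ in $\an{\structure}_{\format,\degree}$ there is a cylindrical collection of prepared real cellular maps of the stated size. Each map is in $\FD*[\format][\degree]$, the collection covers $\rReal^n$, and the real parts of its cells are compatible with every $S_i$. We then pass from a $1/2$-extension to a $\{\sigma\}$-extension at the end using the sharp refinement theorem \cite[Theorem 2.30]{BinyaminiCarmonNovikovComplexCells}. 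This step costs a factor $\poly(1/\sigma)$ and preserves cylindricity and preparedness.

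The base case treats a basic set $Z\cap\rReal\cell$, where $\ext{\cell}{1/2}$ is a real complex cell and $Z\subset\ext{\cell}{1/2}$ is a symmetric hypersurface. We first cover $\rReal^n$ by $2^n$ real cells as in \Cref{rem: algebraic hypersurfaces}, splitting $\cComplex=\disc[2]\cup\puncDisc*[1]$. We then need to relate these standard cells to $\cell$. To do this we apply the \s CPT with \s CD to the definable complex sets given by the cell $\cell$ itself and by its boundary data, namely the graphs of the functions defining $\cell$ made complex-definable via \Cref{prop: cells are definable}. We also include the hypersurface $Z$, which becomes definable in the ambient cells after this refinement. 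Since all input sets are symmetric, we use the real version of the \s CPT. It produces real cellular maps in $\FD*[\format][\degree]$ whose real parts are compatible with $\rReal\cell$ and $Z$. The coordinate splitting together with \Cref{cor: images of real cellular maps} guarantees that the union of the images covers $\rReal^n$.

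For the inductive step we handle several sets at once. Given a family of sets, we collect all the basic sets and the hypersurfaces appearing in their construction sequences, which number at most $\polyfd$ by sharp generation. We then apply the real \s CPT simultaneously to this whole family, giving a single cover compatible with every basic set. Boolean combinations are then automatic, since compatibility with each basic set implies compatibility with any boolean combination of them. Projections are the delicate part. By cylindricity, the cells over the base $\projbase$ form a cylindrical decomposition, so the projection of a compatible set is a union of base cells. To obtain cylindricity we construct the collection fibre by fibre, running the CPT in the cylindrical manner of \Cref{def: cylindrical collection}: we first treat the projections to $\rReal^{\ell}$ of all the relevant sets and then lift. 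Preparedness comes from the prepared form of the CPT.

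The main obstacle is the base step: realizing the real part of a foreign cell $\cell$ inside the standard ambient covering with only polynomial cost, since $Z$ is defined only on $\ext{\cell}{1/2}$ and cannot be continued. My first attempt would be to pull back along the cells produced by the CPT. The \s CPT, applied to the complexified boundary graphs of $\cell$ in $\cComplex^n$, yields maps whose images either lie inside $\cell$ with a $1/2$-extension landing in $\ext{\cell}{1/2}$, by the extension-preservation property of the CPT, or are disjoint from $\rReal\cell$. On the first kind of maps we pull back $Z$ and apply the CPT again. The bounds then follow because each of the two CPT applications is polynomial in format and degree.
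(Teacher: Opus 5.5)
There is a genuine gap at exactly the step you call the main obstacle. It is the central difficulty of the theorem, not a technicality.

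The \s CPT only handles analytic hypersurfaces of the single cell on which it is run. None of your inputs are of that kind in the standard cells covering $\cComplex^n$:
\begin{itemize}
\item $Z$ is a hypersurface of $\ext{\cell}{1/2}$ only;
\item the graphs $\{z_k=\pm r(\mathbf z_{1..k-1})\}$ of the radii of $\cell$ are defined only over the foreign base $\ext{\cell_{1..k-1}}{1/2}$;
\item \Cref{prop: cells are definable} merely makes these sets definable as real subsets of some $\rReal^{2m}$, not hypersurfaces of the ambient cells.
\end{itemize}
Compatibility with $\cell$ itself, in the complex sense, is impossible. Already for $\cell=D(r)\subset\cComplex$, each point of the circle $\{|z|=r\}$ would have to lie in an image disjoint from $D(r)$. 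Such an image is either a point or open, so infinitely many cells would be needed.

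Nor does the CPT have an ``extension-preservation property''. Its maps have extensions landing in the extension of the ambient cell and say nothing about $\ext{\cell}{1/2}$. So it does not deliver the dichotomy you need: that every map whose image meets $\rReal\cell$ has extended image inside $\ext{\cell}{1/2}$, so that $Z$ can be pulled back. The same problem breaks your inductive step. One application of the real \s CPT to ``the whole family'' presupposes a single cell on which all the $Z_\alpha\subset\ext{\cell_\alpha}{1/2}$ are defined, and that is exactly what is unavailable (see \Cref{sec: difference}).

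That dichotomy is the right target. What your proposal lacks is the idea of \Cref{lem: simultaneous cover}, which produces it for all the $\cell_\alpha$ at once with polynomial bounds. A \emph{simultaneous cellular cover} only asks that a map whose image meets $\cell_\alpha$ have its whole extended image in $\ext{\cell_\alpha}{1/2}$; images may straddle $\partial\cell_\alpha$. It is built by induction on the length:
\begin{itemize}
\item start from a cylindrical simultaneous cover of the bases and pull back the radii of the fibres $\cell[F]_\alpha$;
\item make these radii pairwise distinct using the \s CPT on a discriminant set;
\item cluster all of them around $0$ with gap parameter $\gamma=1-1/O_\ell(k)$, chosen so that $\gamma^{O_\ell(k)}>1/2$.
\end{itemize}
Then any clustering fibre whose $\gamma$-extension meets $\cell[F]_\alpha$ has its $\gamma$-extension inside $\ext{\cell[F]_\alpha}{1/2}$.

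Only on the resulting cells $\widehat\cell_\beta$ with $f_\beta(\ext{\widehat\cell_\beta}{1/2})\subset\ext{\cell_\alpha}{1/2}$ do the pullbacks of $Z_\alpha$, of the coordinate hyperplanes and of the complex hypersurfaces $\{z_k=\pm r\}$ become hypersurfaces of one cell. The real \s CPT there gives compatibility with $Z_\alpha\cap\rReal\cell_\alpha$ on real parts, which is all the theorem claims.

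You also skip products with $\rReal$ in construction sequences, such as $\rReal\times S$ and $\pi(S)\times\rReal$, which do not respect cylindricity. The paper first uses \Cref{lem: absorb product with R} to write every $S_i$ as projections and boolean combinations of basic sets in a single $\rReal^\ell$, $\ell=O_F(1)$. \Cref{cor: cylindrical cover and generated sets} then reduces everything to the basic sets. An induction over construction sequences would in any case need common refinements of cylindrical covers, which is again the simultaneous-cover problem.
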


Before giving the proof of \Cref{thm: real parameterization}, we first consider the following sequence of corollaries.

\begin{corollary}
	\label{cor: ag has CD}
	Let $\{\FD[\format][\degree]\}$ be a \so-minimal structure with \s CD.
	Then the structure $\{\an{\structure}_{\format,\degree}\}$ is \so-minimal and has \s CD.
\end{corollary}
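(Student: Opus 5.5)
The \so-minimality of $\{\an{\structure}_{\format,\degree}\}$ is already given by \Cref{prop: ag is sharp}, so only \s CD remains. The plan is to read a sharp cell decomposition of $\rReal^n$ directly off the cylindrical cellular cover produced by \Cref{thm: real parameterization}. The key point is that the real parts $\rRealPos\cell_j$ of real complex cells are (unions of) real cylindrical cells, and that real cellular maps act triangularly on them.

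\emph{Step 1.} Given $S_1,\dots,S_k\in\an{\structure}_{\format,\degree}$, apply \Cref{thm: real parameterization} with a fixed $\sigma$, say $\sigma=1/2$. This gives a cylindrical collection $\{f_j:\hExt{\cell_j}{\sigma}\to\cComplex^n\}$ of $\polyfd[\format][\degree,k]$ prepared real cellular maps. Each $f_j$ lies in $\FD*[\format][\degree]$, the collection is compatible with every $S_i$, and $\rReal^n\subset\bigcup_j f_j(\rRealPos\cell_j)$.

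\emph{Step 2.} Show that each image $f_j(\rRealPos\cell_j)$ is a real cylindrical cell, or a disjoint union of $\order[n]{1}$ such cells, with complexity controlled in $\an{\structure}$. Since $f_j$ is cellular, its $k$-th coordinate depends only on $\initial{\varZ}{1}{k}$ and is monotone (injective) in $z_k$ on the real part. Since $\rRealPos\cell_j$ is a real cylindrical cell (fibers are points or intervals), induction on $n$ shows the image is cylindrical. The fibers are bounded by graphs of functions $f_{j,k}$ evaluated at the boundary values of the fibers of $\rRealPos\cell_j$. Definability and complexity $\an{\structure}_{\order[\format]{1},\polyfd}$ of these images, and of the graphs of the bounding functions, follow from \Cref{cor: images of real cellular maps} and \Cref{cor: rho extension complexity}.

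\emph{Step 3.} Turn the cover into a partition. The images may overlap, so one needs cylindricity of the collection (\Cref{def: cylindrical collection}). I expect it to mean that the projections of the images to $\rReal^{m}$, for each $m\leq n$, are pairwise either equal or disjoint, compatibly with the tree structure. If so, the distinct images already form a cylindrical decomposition. Otherwise, refine as follows. Compatibility with $S_i$ means each image lies in $S_i$ or in its complement. Apply the cell decomposition fiberwise: over each base cell, the finitely many ($\polyfd$ many) bounding graphs over it are ordered into a chain, giving a cylindrical refinement in which the number of cells stays polynomial.

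\emph{Main obstacle.} The hard part is Step 3, specifically ensuring that the bases of different maps align so that the decomposition is genuinely cylindrical rather than a mere cover. I would first try to extract this directly from the definition of a cylindrical collection. Failing that, I would fall back on \s CD of the ambient $\structure$ applied to the $\polyfd$ images, which are definable in $\an{\structure}$ by Step 2. This fallback only yields cells definable in $\structure$, so it would need an extra argument that those cells are themselves definable in $\an{\structure}$ with polynomial complexity. This is why the intended route is Step 3 via cylindricity.
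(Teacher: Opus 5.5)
Your Steps 1 and 2 are sound. Step 3 is the crux, as you say yourself, and it is left open. The mechanism you hope for there does not exist.

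\Cref{def: cylindrical collection} only requires that the maps sharing a common base map have images filling the cylinder over that base image. It says nothing about images, or their projections, being equal or disjoint. Already for $n=1$, the cover of $\cComplex$ by $\disc[2]$ and $\puncDisc*[1]$, with the maps $z\mapsto\pm z$ and the point $0$, is cylindrical. Yet its images $(0,2)$ and $(1,\infty)$ overlap: real cellular covers are covers, not partitions.

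Your fallback, ordering the bounding graphs into a chain ``over each base cell'', presupposes a partition of $\rReal^{n-1}$ into cells of $\an{\structure}$ with two properties:
\begin{itemize}
\item each cell is contained in or disjoint from every base image (the projection of an $f_j(\rRealPos\cell_j)$ to $\rReal^{n-1}$);
\item over each cell, any two bounding functions are either identical or strictly ordered.
\end{itemize}
Producing such a partition is exactly \s CD for $\an{\structure}$ in dimension $n-1$. Using \s CD of $\structure$ instead gives cells not known to lie in $\an{\structure}$, as you note.

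Your route can be closed by an induction on $n$, which the proposal does not contain:
\begin{itemize}
\item Apply the inductive hypothesis to the $\polyfd[\format][\degree,k]$ base images, together with the comparison sets $\{g_a<g_b\}$ and $\{g_a=g_b\}$, taken on the common domain of each pair of bounding functions.
\item Over each resulting base cell, build the graphs and bands of the now totally ordered bounding functions.
\item Check that each new cell lies inside a single image. This holds because the images over that base cell cover the cylinder above it and the ordering is constant on the cell. Hence each new cell is compatible with every $S_i$, and the bounds stay polynomial.
\end{itemize}

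The paper never builds the decomposition by hand. It uses only that the $\polyfd$ images $f_j(\rRealPos\cell_j)$ are connected and lie in $\an{\structure}_{\order[\format]{1},\polyfd}$, by \Cref{cor: images of real cellular maps}. By compatibility, every connected component of $S$ is a union of some of these images. So $S$ has $\polyfd$ components, each in $\an{\structure}_{\order[\format]{1},\polyfd}$. This is fed into \cite[Proposition 1.23]{BinyaminiNovikovZak2022} to conclude \s CD. Cylindricity of the collection plays no role in that argument.
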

\begin{remark}
	We note that we have already established that $\{\an{\structure}_{\format,\degree}\}$ is \so-minimal in \Cref{prop: ag is sharp}, however this will also follow from the proof below.
\end{remark}
\begin{proof}[Proof of \Cref{cor: ag has CD}]
	Let $S\in\an{\structure}_{\format,\degree}$.
	Apply \Cref{thm: real parameterization} and let $\{f_j:\ext{\cell_j}{1/2}\to\cComplex^\ell\}$ be the resulting collection of prepared real cellular maps.
	By \Cref{cor: images of real cellular maps}, the restrictions $\restrict{f_j}{\rRealPos\cell_j}$ are in $\an{\structure}_{\order[\format]{1},\polyfd[\format][\degree]}$.
	The images of these maps are connected, which implies that $S$ has $\polyfd$ connected components, each of them in~$\an{\structure}_{\order[\format]{1},\polyfd[\format][\degree]}$.
	The result now follows by~\cite[Proposition 1.23]{BinyaminiNovikovZak2022}.
\end{proof}

\begin{corollary}
	\label{cor: analytic generation idempotence}
	Let $\{{\widetilde{\structure}}_{\format,\degree}\}$ be a \so-minimal structure with \s CD and let $\{\FD[\format][\degree]\}=\{\an{\widetilde{\structure}}_{\format,\degree}\}$.
	Then $\{\FD[\format][\degree]\}$ and $\{\an\structure_{\format,\degree}\}$ are equivalent.
	In particular, the structures $\structure$ and $\an{\structure}$ are equal as collections of sets.
\end{corollary}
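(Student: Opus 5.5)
We must show two reductions of FD-filtrations: $\an\structure_{\format,\degree}$ reduces to $\FD[\format][\degree]$, and conversely $\FD[\format][\degree]=\an{\widetilde\structure}_{\format,\degree}$ reduces to $\an\structure_{\format,\degree}$. The first direction is exactly \Cref{prop: ag is sharp} applied to the \so-minimal structure $\structure$. By \Cref{cor: ag has CD}, $\structure=\an{\widetilde\structure}$ is \so-minimal with \s CD, so all results of the previous section apply to $\structure$ itself.

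For the converse, we compare construction sequences (\Cref{rem: construction sequence}). Since $\{\FD[\format][\degree]\}$ is sharply generated from the basic sets $Z\cap\rReal\cell$, with $\ext{\cell}{1/2}$ a real complex cell and $Z\subset\ext{\cell}{1/2}$ a symmetric analytic hypersurface, both in $\widetilde\structure_{\format,\degree}$, it suffices to show the following: each such basic set lies in $\an\structure_{\order[\format]{1},\polyfd}$. Once this holds for the generators, the sharp generation axioms (\s1)--(\s4) propagate the polynomial bounds through boolean operations and projections. This gives $\FD[\format][\degree]\subset\an\structure_{\order[\format]{1},\polyfd}$.

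To prove this for a generator, first note that $\widetilde\structure_{\format,\degree}\subset\widetilde\structure$ is definable. Next, $\cell$, $\ext{\cell}{1/2}$ and $Z$ are subsets of $\cComplex^\ell\cong\rReal^{2\ell}$. The key step is \Cref{prop: cells are definable}, applied in $\widetilde\structure$: the cell and the graphs of its defining functions are in $\an{\widetilde\structure}_{\order[\format]{1},\polyfd}=\FD*[\format][\degree]$. The same argument applies to $Z$, viewed as the zero set of a holomorphic function or, more precisely, as a subset of the cell built from sets already shown definable with controlled complexity. Hence $\ext{\cell}{1/2}$ is a real complex cell in $\FD*[\format][\degree]$ and $Z$ is a symmetric analytic hypersurface in $\FD*[\format][\degree]$, under the convention of \cite[Remark 2.22]{BinyaminiCarmonNovikovComplexCells} that cell complexity is measured through its defining functions. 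By \Cref{def: ag structure} applied to $\structure$, the set $Z\cap\rReal\cell$ is then a generator of $\an\structure$ with format $\order[\format]{1}$ and degree $\polyfd$, as required.

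The main obstacle is this transfer step. \Cref{prop: cells are definable} handles cells and graphs of holomorphic functions over cells, but $Z$ is only a hypersurface. If $Z$ is not globally a zero set of a definable function, I would cover $\ext{\cell}{1/2}$ by the sharp refinement theorem or by the \s CPT in $\widetilde\structure$, which has \s CD. I would take cells compatible with $Z$, represent $Z$ on each as a union of graphs or fibers of cellular maps, and apply \Cref{prop: cells are definable} together with \Cref{cor: rho extension complexity} to each piece. The number of pieces is $\polyfd$, so the complexity stays polynomial.

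The final claim, that $\structure=\an\structure$ as collections of sets, then follows immediately from the equivalence.
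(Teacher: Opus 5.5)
Your reduction to generators via sharp generation is fine. The gap is in the transfer step.

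\Cref{prop: cells are definable} only handles a cell and a function given on a strictly larger extension $\hExt{\cell}{\rho}$, and it yields only their restriction to $\cell$. The radii defining $\ext{\cell}{1/2}$ and the hypersurface $Z$ are given only on $\ext{\cell}{1/2}$ itself. So nothing you cite shows that $\ext{\cell}{1/2}$ is a real complex cell of $\structure$, or that $Z$ is definable in $\structure$. Hence you cannot conclude that $Z\cap\rReal\cell$ is a generator of $\an\structure$.

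Shrinking the cell does not help. Any real complex cell $\cell'$ whose $1/2$-extension still has room inside $\ext{\cell}{1/2}$ is strictly smaller than $\cell$: for $\cell=\disc[r]$ you need $\cell'=\disc[r']$ with $r'<r$. So $\rReal\cell'$ misses part of $\rReal\cell$.

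Your fallback paragraph misplaces the difficulty. It addresses only $Z$, and it aims to make $Z$ a definable complex set, which is not what \Cref{def: ag structure} asks for. It also speaks of covering $\ext{\cell}{1/2}$, whereas the refinement theorem and the \s CPT only cover $\cell$.

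The missing idea is to give up on realizing $Z\cap\rReal\cell$ as a single generator. Instead, write it as a union of images of real parts of real cellular maps:
\begin{itemize}
\item Apply the real \s CPT in $\widetilde\structure$ to $\cell\subset\ext{\cell}{1/2}$, with $Z$, the coordinate hyperplanes, and the graphs of the radii of $\cell$ and of their negatives.
\item Fix $\sigma$ so that each $\hExt{\cell_j}{\sigma}$ contains $\ext{\cell_j}{1/4}$.
\item Then $Z\cap\rReal\cell$ is the union of those among the $\polyfd$ sets $g_j(\rRealPos\cell_j)$ that it contains.
\item \Cref{prop: cells are definable}, applied in $\widetilde\structure$ to the cell $\ext{\cell_j}{1/2}$, puts $\ext{\cell_j}{1/2}$ and $g_j$ into $\FD*[\format][\degree]$.
\item \Cref{cor: images of real cellular maps}, applied in $\structure$, then puts each image into $\an\structure_{\order[\format]{1},\polyfd}$.
\end{itemize}

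This is exactly the mechanism of the paper's proof. The paper applies it once, directly to an arbitrary $S\in\FD[\format][\degree]$, via \Cref{thm: real parameterization} for $\widetilde\structure$, so it needs no reduction to generators. Your repaired route would use only the single-hypersurface real \s CPT instead of the simultaneous-cover machinery behind \Cref{thm: real parameterization}. The price is the extra bookkeeping over generators.
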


\begin{proof}
	By \Cref{prop: ag is sharp}, we have that $\{\FD[\format][\degree]\}$ is reducible to $\{{\widetilde{\structure}}_{\format,\degree}\}$, that $\{\an\structure_{\format,\degree}\}$ is reducible to $\{\FD[\format][\degree]\}$, and that $\{\FD[\format][\degree]\}$ is \so-minimal.

	It remains to show that $\{\FD[\format][\degree]\}$ is reducible to $\{\an\structure_{\format,\degree}\}$.
	Let $S\in \FD[\format][\degree]$.
	We apply \Cref{thm: real parameterization} with respect to $\{{\widetilde{\structure}}_{\format,\degree}\}$ and let $\{f_j:\ext{\cell_j}{1/2}\to\cComplex^\ell\}$ be the resulting collection of prepared real cellular maps.
	By \Cref{prop: cells are definable}, we have $f_j\in\FD*[\format][\degree]$.
	Thus, by \Cref{cor: images of real cellular maps}, the restrictions $\restrict{f_j}{\rRealPos\cell_j}$ are in $\an{\structure}_{\order[\format]{1},\polyfd[\format][\degree]}$.
	Since $S$ is the union of images of $\polyfd$ of these restrictions, we have $S\in \an{\structure}_{\order[\format]{1},\polyfd[\format][\degree]}$, which finishes the proof.
\end{proof}

From the last two corollaries together we get the following (see \Cref{def: analytically generated structure}).
\begin{corollary}
	\label{cor: ag so cd inheritence}
	Let $\{{{\structure}}_{\format,\degree}\}$ be a \so-minimal structure with \s CD.
	Then $\{\an{{\structure}}_{\format,\degree}\}$ is an analytically generated \so-minimal structure with \s CD.
\end{corollary}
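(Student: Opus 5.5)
The statement to be proved is \Cref{cor: ag so cd inheritence}. It has two parts. The first is that $\{\an{\structure}_{\format,\degree}\}$ is \so-minimal with \s CD. The second is that it is analytically generated, i.e.\ that it is equivalent to its own analytically generated reduct $\{\an{(\an{\structure})}_{\format,\degree}\}$. I would prove the two parts separately, using the two preceding corollaries.

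For the first part, apply \Cref{cor: ag has CD} directly to $\{\FD[\format][\degree]\}$, which is \so-minimal with \s CD by hypothesis. This shows that $\{\an{\structure}_{\format,\degree}\}$ is \so-minimal and has \s CD. Nothing further is needed here.

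For the second part, apply \Cref{cor: analytic generation idempotence} with $\widetilde{\structure}:=\structure$. Its hypotheses are exactly our assumptions, namely that $\{\widetilde{\structure}_{\format,\degree}\}$ is \so-minimal with \s CD. Its conclusion is that $\{\an{\widetilde{\structure}}_{\format,\degree}\}=\{\an{\structure}_{\format,\degree}\}$ is equivalent to the structure analytically generated from it. By \Cref{def: analytically generated structure}, this says precisely that $\{\an{\structure}_{\format,\degree}\}$ is analytically generated.

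The only point needing care is notational. In \Cref{cor: analytic generation idempotence} the symbol $\structure$ denotes $\an{\widetilde{\structure}}$, so I would check that the $\an{\structure}$ appearing there is the reduct analytically generated from this structure, computed with respect to its own filtration. \Cref{def: analytically generated structure} requires exactly that reduct, with its induced FD-filtration. Once this is matched, the corollary follows, and I do not expect any genuine obstacle beyond this bookkeeping.
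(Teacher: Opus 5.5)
Your proposal is correct and follows the paper's approach exactly. The paper obtains this corollary by combining \Cref{cor: ag has CD}, which gives \so-minimality and \s CD, with \Cref{cor: analytic generation idempotence} applied with $\widetilde{\structure}=\structure$, which gives analytic generation via \Cref{def: analytically generated structure}; your notational check is the same bookkeeping.
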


\subsection{Proof of the parametrization theorem}
We now turn to the proof of \Cref{thm: real parameterization}.
The proof in our case is complicated by the fact that, unlike~\cite[Corollary 34]{BinyaminiNovikov2019}, the sets to be parameterized are not necessarily expressed in terms of functions which continue holomorphically to a shared polydisc.
Thus, we must construct cells on which the functions (locally) defining the sets to be parametrized may be simultaneously considered.

A similar problem is addressed in~\cite[Section 6.2]{Binyamini2024}.
The construction given there, while effective, does not guarantee polynomial dependence on the degrees of the relevant sets.
We give a construction which yields the desired size and complexity bounds.

For the rest of this section, we fix a \so-minimal structure $\{\FD[\format][\degree]\}$ with \s CD.
We introduce the following notions of a simultaneous cellular cover and of a cylindrical collection of real cellular maps.

\begin{definition}[Simultaneous cellular cover]
	Let $\rho,\sigma>0$ and let $\{\hExt{\cell_i}{\rho}\}$ be a finite collection of real complex cells of length $\ell$.
	A finite collection of real cellular maps $\{f_j:\hExt{\widehat\cell_j}{\sigma}\to\cComplex^\ell\}$ is a \emph{simultaneous cellular cover} of $\{\hExt{\cell_i}{\rho}\}$ if the following two conditions hold.
	\begin{itemize}
		\item For each $i$, there is a subcollection of $\{f_j\}$ which is a real cellular cover of $\hExt{\cell_i}{\rho}$.
		\item For all $i,j$, if the intersection $f_j(\hExt{\widehat\cell_j}{\sigma})\cap\cell_i$ is not empty, then $f_j(\hExt{\widehat\cell_j}{\sigma})\subset \hExt{\cell_i}{\rho}$.
	\end{itemize}
\end{definition}
It is easy to check that, in the notation above, replacing one of the maps $f_j$ by its composition with a real cellular cover of $\hExt{\widehat\cell_j}{\sigma}$ yields another simultaneous cellular cover of $\{\hExt{\cell_i}{\rho}\}$.

\begin{definition}[Cylindrical collection of cellular maps]
	\label{def: cylindrical collection}
	Let $\{f_j:\cell_j\odot\cell[F]_j\to\cComplex^\ell\}$ be a collection of real cellular maps, for $j$ in some index set $J$.
	The collection is \emph{cylindrical} if either $\ell=0$ or the following two conditions hold:
	\begin{itemize}
		\item The collection $\{\initial{(f_j)}{1}{\ell-1}:\cell_j\to\cComplex^{\ell-1}\}$ is cylindrical;
		\item Let $j\in J$ and let $J'\subset J$ be the collection of those $j'\in J$ such that $\cell_{j'}=\cell_{j}$ and $\initial{(f_{j'})}{1}{\ell-1}=\initial{(f_j)}{1}{\ell-1}$.
		We have
		\begin{equation}
			\bigcup_{j'\in J'}f_{j'}(\rRealPos(\cell_{j'}\odot\cell[F]_{j'}))=\initial{(f_j)}{1}{\ell-1}(\rRealPos\cell_j)\times \rReal.
		\end{equation}
	\end{itemize}
\end{definition}

\begin{remark}
	We note that the conditions in \Cref{def: cylindrical collection} hold automatically for a collection of real cellular maps $\{f_j\}$ such that the restrictions $\{\initial{(f_j)}{1}{k}\}$ are disjoint real cellular covers for all $k$.
	However, composing one of the maps $f_j$ with a real cellular cover does not, in general, preserve cylindricity.
	Therefore, from now on whenever we apply the refinement theorem, the \s CPT or the \s CPrT~\cite[Theorems 2.30, 2.37, and 2.39]{BinyaminiCarmonNovikovComplexCells} to a cell in a cylindrical collection, we will implicitly mean that we do it in the following ``cylindrical'' manner.

	In the proof of the refinement theorem (see~\cite[Section 6.1]{BinyaminiNovikov2019}), whenever we would inductively apply the refinement theorem to the base $\cell$ of a cell $\cell\odot\cell[F]_j$, we use the same cellular cover of $\cell$ for all cells $\cell\odot\cell[F]_j$ sharing this base (if different $j$ call for different extension parameters in this inductive application, we use the smallest parameter among those required).
	We apply the same cylindrical procedure in this inductive application of the refinement theorem to the base~$\cell$.
		
	We make the same kind of modification whenever, in the proof of the \s CPT, we would inductively apply the \s CPT to the base $\cell$ of a cell $\cell\odot\cell[F]_j$.
	In this case, we consider the collection consisting of all relevant analytic hypersurfaces of $\cell$ corresponding to all cells $\cell\odot\cell[F]_j$ sharing this base.
	The proof of the \s CPrT may be treated in the same way.

	It is straightforward to verify that these modifications still yield cellular covers with the desired size and complexity bounds.
	Furthermore, composing the cylindrical collection $\{f_j:\hExt{\cell_j}{\rho}\to\cComplex^\ell\}_{j\in J}$ with cellular covers of the cells $\{\hExt{\cell_j}{\rho}\}$ constructed in this way yields a cylindrical collection of cellular maps.
\end{remark}

The motivation for the notion of a cylindrical collection of cellular maps comes from the following lemma.
\begin{lemma}
	\label{lem: cylindrical compatibility}
	Let $\{f_j:\cell_j\to\cComplex^{\ell+1}\}$ be a cylindrical collection of real cellular maps such that the maps $\{\restrict{f_j}{\rRealPos\cell_j}\}$ are compatible with some non-empty set $S\subset \rReal^{\ell+1}$.
	Then, for every $1\leq k\leq\ell$, every map in the collection $\{\initial{(f_j)}{1}{k}:\initial{(\rRealPos\cell_j)}{1}{k}\to\rReal^k\}$ is compatible with the projection $\initial{\pi}{1}{k}(S)\subset\rReal^k$ of $S$ onto its first $k$ coordinates.
\end{lemma}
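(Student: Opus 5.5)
By downward induction on $k$, it suffices to treat $k=\ell$; the general case follows by applying the same argument to the collection $\{\initial{(f_j)}{1}{\ell}\}$ and the set $\initial{\pi}{1}{\ell}(S)$. The first condition of \Cref{def: cylindrical collection} says this collection is itself cylindrical, and compatibility with $\initial{\pi}{1}{\ell}(S)$ will be established at the first step. Here ``compatible'' means that the image $\initial{(f_j)}{1}{\ell}(\rRealPos\cell_j)$ of each restricted map is either contained in $\initial{\pi}{1}{\ell}(S)$ or disjoint from it.

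Fix $j$ and write $\cell_j=\cell\odot\cell[F]$, $g=\initial{(f_j)}{1}{\ell}$ and $B=g(\rRealPos\cell)$. Let $J'$ be as in \Cref{def: cylindrical collection}, the indices $j'$ sharing the base $\cell$ and the map $g$. By cylindricity,
\begin{equation*}
	\bigcup_{j'\in J'}f_{j'}(\rRealPos\cell_{j'})=B\times\rReal .
\end{equation*}
There are two cases. If $B\cap\initial{\pi}{1}{\ell}(S)=\emptyset$, we are done. Otherwise pick $x\in B$ and $y\in\rReal$ with $(x,y)\in S$. By the displayed equality, $(x,y)\in f_{j'}(\rRealPos\cell_{j'})$ for some $j'\in J'$. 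Compatibility of $f_{j'}$ with $S$ then forces $f_{j'}(\rRealPos\cell_{j'})\subset S$.

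The key step is that the projection of $f_{j'}(\rRealPos\cell_{j'})$ onto the first $\ell$ coordinates is all of $B$. This holds because $f_{j'}$ is a cellular map whose first $\ell$ components form $g$, and the real part of a fibered cell $\cell\odot\cell[F]$ projects onto $\rRealPos\cell$, since every real fiber $\rRealPos\cell[F](z)$ over a point of $\rRealPos\cell$ is nonempty. Hence $B=\initial{\pi}{1}{\ell}(f_{j'}(\rRealPos\cell_{j'}))\subset\initial{\pi}{1}{\ell}(S)$, which gives compatibility for $g$.

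The main point to verify is this surjectivity of the real part of a cell onto the real part of its base, i.e.\ that fibers over real points have nonempty real parts. It follows from the definitions of real cells: in every fiber type (point, disc, annulus, punctured disc) the fiber is symmetric and meets $\rRealPos$. A minor secondary point is that compatibility of the restricted maps means compatibility of their images, which is the interpretation used above.
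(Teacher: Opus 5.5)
Your proposal is correct and follows the paper's proof essentially step for step. You reduce to $k=\ell$ because the truncated collection is again cylindrical. You then use the second cylindricity condition to find $j'$ with the same base and the same truncated map such that $f_{j'}(\rRealPos\cell_{j'})$ meets, and hence lies in, $S$, and project to the first $\ell$ coordinates. Your explicit check that the real part of each fiber over $\rRealPos\cell$ is nonempty spells out what the paper uses implicitly in its final displayed equality; that check should also cover fibers of type $D_\infty$, where it clearly holds.
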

\begin{proof}
	Since $\{\initial{(f_j)}{1}{\ell}\}$ is a cylindrical collection whenever $\{f_j\}$ is, it is enough to treat the case $k=\ell$.
	Let $j$ be such that $\initial{(f_{j})}{1}{\ell}(\initial{(\rRealPos\cell_{j})}{1}{\ell})\cap \initial{\pi}{1}{\ell}(S)\neq\emptyset$, i.e.\ there exists $\initial{\varX}{1}{\ell}\in \initial{(\rRealPos\cell_j)}{1}{\ell}$ and $y\in\rReal$ satisfying $(\initial{(f_j)}{1}{\ell}(\initial{\varX}{1}{\ell}),y)\in S$.
	Then, since the collection $\{f_j\}$ is cylindrical, there exist $j'$ and $\varX\in \rRealPos\cell_{j'}$ such that $\initial{(\cell_j)}{1}{\ell}=\initial{(\cell_{j'})}{1}{\ell}$ and $f_{j'}(\varX)=(\initial{(f_j)}{1}{\ell}(\initial{\varX}{1}{\ell}),y)\in S$.
	Since the maps $\{\restrict{f_j}{\rRealPos\cell_j}\}$ are compatible with $S$, we have that $f_{j'}(\rRealPos\cell_{j'})\subset S$.
	In particular, we have
	\begin{align}
		\begin{split}
		\initial{(f_{j})}{1}{\ell}(\initial{(\rRealPos\cell_{j})}{1}{\ell}) &= \initial{(f_{j'})}{1}{\ell}(\initial{(\rRealPos\cell_{j'})}{1}{\ell})\\
		&\subset \projbase(S)
		\end{split}
	\end{align}
	as required.
\end{proof}

\begin{corollary}
	\label{cor: cylindrical cover and generated sets}
	Let $\{f_j:\cell_j\to\cComplex^{\ell+1}\}$ be a cylindrical collection of cellular maps, such that the maps $\{\restrict{f_j}{\rRealPos\cell_j}\}$ are compatible with sets $S_1,\dots, S_k\subset \rReal^{\ell+1}$.
	Then the restrictions of the maps $\{f_j\}$ to the real parts of the bases of the cells $\cell_j$ are compatible with any set obtained from the sets $\{S_i\}$ by iterated projections, negations and boolean combinations.
\end{corollary}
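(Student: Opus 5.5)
The plan is to reduce to \Cref{lem: cylindrical compatibility} and then argue by induction on the number of operations used to build the set. I read ``the restrictions to the real parts of the bases'' as the maps $\{\initial{(f_j)}{1}{k}:\initial{(\rRealPos\cell_j)}{1}{k}\to\rReal^k\}$ for every $k\le\ell+1$, where $k=\ell+1$ means the full maps. Compatibility of a map $g$ with a set $T$ means that its image lies either in $T$ or in the complement of $T$. I will prove the following invariant: for every $k$ and every set $T\subset\rReal^k$ obtained from $S_1,\dots,S_k$ by the allowed operations, each map in $\{\initial{(f_j)}{1}{k}\}$ is compatible with $T$.

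The base case is the hypothesis itself, at level $k=\ell+1$. For the inductive step I treat the operations one at a time.
\begin{itemize}
\item \emph{Negation and boolean combinations at a fixed level $k$.} The class of sets compatible with a fixed map $g$ is closed under complement, finite union and finite intersection. For complement, if $g(\cdot)\subset T$ then $g(\cdot)\cap T^c=\emptyset$, and conversely. For unions, $g(\cdot)$ meets $T_1\cup T_2$ only if it meets some $T_i$, and then it is contained in that $T_i$. Otherwise it lies in $T_1^c\cap T_2^c$. Intersections follow from the previous two cases.
\item \emph{Projection.} If $T\subset\rReal^{k}$ is compatible with all maps $\initial{(f_j)}{1}{k}$, then $\initial{\pi}{1}{k-1}(T)$ is compatible with all maps $\initial{(f_j)}{1}{k-1}$. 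To see this, note that the truncated collection $\{\initial{(f_j)}{1}{k}\}$ is cylindrical, by the recursive definition in \Cref{def: cylindrical collection}. Applying \Cref{lem: cylindrical compatibility} to this collection with the set $T$ gives the claim. If $T=\emptyset$, the projection is empty and compatibility is trivial.
\end{itemize}

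Combining these closure properties, induction on the length of the construction sequence gives the invariant, and hence the corollary. The one point needing care is making sure the lemma is applied to a cylindrical collection at the correct level. This is supplied by the first condition in the definition of cylindricity.
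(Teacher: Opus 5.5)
Your proposal is correct and follows the paper's intended argument. The paper states this corollary without proof as an immediate consequence of \Cref{lem: cylindrical compatibility}: compatibility is trivially preserved under complements and boolean combinations at a fixed level, and each projection step is handled by applying the lemma to the truncated collection, which is still cylindrical, exactly as you do. The only blemish is that you use $k$ both for the number of sets $S_1,\dots,S_k$ and for the level index, so one of them should be renamed.
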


Finally, we require the following two lemmas.
\Cref{lem: absorb product with R}, in light of \Cref{cor: cylindrical cover and generated sets} above, allows us to pass from general definable sets to analytic hypersurfaces of complex cells.

\Cref{lem: simultaneous cover} resolves the difference between our setting and that of~\cite[Corollaries 34 and 35]{BinyaminiNovikov2019} by constructing simultaneous cellular covers of the appropriate size and complexity for a given collection of real complex cells.

\begin{lemma}
	\label{lem: absorb product with R}
	Let ${S_1,\dots,S_k\in \an{\structure}_{\format,\degree}}$.
	Then there exists a finite collection of symmetric analytic hypersurfaces $Z_{i,j}$, contained in real complex cells $\ext{\cell_{i,j}}{1/2}$, such that $S_i$ is obtained by iterated projections, negations and boolean combinations of the sets $Z_{i,j}\cap\rReal\cell_{i,j}$.

	We may take the number of these hypersurfaces to be $\polyfd[\format][\degree,k]$, the length of the complex cell $\cell_{i,j}$ to be $\ell_{i,j}=\order[\format]{1}$, and each $Z_{i,j}$ and $\cell_{i,j}$ to be in~${\structure}_{\order[\format]{1},\polyfd}$.
\end{lemma}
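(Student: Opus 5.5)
The proof will simply unwind the construction sequences given by \Cref{def: ag structure} and \Cref{rem: construction sequence}. Membership $S_i\in\an{\structure}_{\format,\degree}$ means that $S_i$ has a construction sequence starting from basic sets, built via axioms (\s1)--(\s4) of~\cite[Definition 2.3]{BinyaminiCarmonNovikovComplexCells}. The basic sets are of the form $Z\cap\rReal\cell$, with $\ext{\cell}{1/2}$ a real complex cell and $Z\subset\ext{\cell}{1/2}$ a symmetric analytic hypersurface, both in $\structure$. By \Cref{rem: algebraic hypersurfaces}, algebraic hypersurfaces may also be rewritten in this form. The plan is to take the $Z_{i,j}$ and $\cell_{i,j}$ to be exactly the basic sets appearing in these sequences, and then to verify the three quantitative claims.

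First I will make explicit what sharp generation gives. Tracking format and degree through the axioms, any set of format $\format$ and degree $\degree$ is obtained by a sequence of projections, complements, products and boolean combinations. In such a sequence:
\begin{itemize}
	\item every ambient dimension is at most $\order[\format]{1}$;
	\item the number of basic sets has total degree bounded by $\polyfd$, so there are $\polyfd$ of them;
	\item each basic set has format $\le\format$ and degree $\le\degree$.
\end{itemize}
The bound on ambient dimension gives $\ell_{i,j}=\order[\format]{1}$. Each basic set then comes from a pair $(\cell,Z)$ in $\FD[\format][\degree]$, or in $\FD*[\format][\degree]$ after the algebraic rewriting of \Cref{rem: algebraic hypersurfaces}; the complexity overhead there is handled by~\cite[Lemma 5.9]{BinyaminiNovikovZak2022}.

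Second, I will deal with products, which is what the lemma's name refers to. A basic set may enter the sequence only after taking a product with $\rReal^m$ or with other basic sets. Products of sets of the form $Z\cap\rReal\cell$ can be handled in two ways. One can rewrite $(Z\cap\rReal\cell)\times\rReal$ as a set of the same form, using the cell $\cell\odot\cComplex$ and the hypersurface $Z\times\cComplex$; if a cell structure with $1/2$-extension is wanted for the extra factor, one splits $\cComplex=\disc[2]\cup\puncDisc*[1]$ as in \Cref{rem: algebraic hypersurfaces}. Alternatively, one can note that products are themselves boolean combinations of preimages under coordinate projections, which stay within the allowed operations. Either way, the number of basic sets grows only polynomially and lengths grow only additively, so the bounds remain $\order[\format]{1}$ for length and $\polyfd$ for complexity. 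Using $k$ sets simply multiplies the count by $k$, giving $\polyfd[\format][\degree,k]$ hypersurfaces in total.

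I expect the main obstacle to be the bookkeeping in the first step. One must check that the sharply generated filtration really admits construction sequences whose \emph{number} of steps is bounded polynomially in $\degree$ for fixed $\format$, and not merely sequences whose final degree is bounded. I would establish this by induction on the axioms. For each axiom I would verify that the resulting set inherits a sequence whose basic sets number at most a polynomial in the degrees of the inputs. The boolean-combination axiom is the key case: it adds degrees, so the counts add, while format increases by at most one. The remaining statements of the lemma are then immediate.
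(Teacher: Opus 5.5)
There is a genuine gap: you never eliminate cartesian products that occur in the middle of a construction sequence, and that elimination is the real content of the lemma. The conclusion requires $S_i$ to be obtained from the sets $Z_{i,j}\cap\rReal\cell_{i,j}$ by projections, complements and boolean combinations \emph{only}. These are exactly the operations that \Cref{cor: cylindrical cover and generated sets} can propagate, and that corollary says nothing about products. If products were allowed, the lemma would be essentially immediate from \Cref{def: ag structure} and \Cref{rem: construction sequence}.

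Your first route, $(Z\cap\rReal\cell)\times\rReal=(Z\times\cComplex)\cap\rReal(\cell\odot\cComplex)$, is correct. However, it only absorbs a product applied directly to a basic set. Your claim that basic sets enter the sequence only after being multiplied by $\rReal^m$ is unjustified: the axioms allow $A\times\rReal$ and $\rReal\times A$ for an arbitrary intermediate set $A$, for instance $A=\pi(B)$. Your second route is circular, because preimages under coordinate projections are precisely products with $\rReal^m$, which are not among the permitted operations.

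The missing idea is how to push every product down to the basic sets. Products with $\rReal$ on either side commute with complements, unions and intersections, and $\rReal\times(\cdot)$ commutes with the projection $\pi$ omitting the last coordinate. But $\pi(S)\times\rReal$ is the projection of $S\times\rReal$ along the $n$-th coordinate, not the last. Coordinate permutations are not permitted operations, and they cannot be absorbed into complex cells, which are fibered in a fixed coordinate order.

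The paper handles this by noting that, for $S\subset\rReal^n$, the set $\pi(S)\times\rReal$ is the image under $n+1$ successive applications of $\pi$ of
\[
(\rReal^{n+1}\times S)\cap\{x_1=x_{n+2}\}\cap\cdots\cap\{x_{n-1}=x_{2n}\}\cap\{x_{n+1}=x_{2n+1}\}.
\]
This uses only three kinds of ingredient:
\begin{itemize}
\item left products, which commute down to the leaves, where $\rReal\times(Z\cap\rReal\cell)=(\cComplex\times Z)\cap\rReal(\cComplex\odot\cell)$;
\item algebraic hyperplanes, which are basic sets by \Cref{rem: algebraic hypersurfaces};
\item projections.
\end{itemize}
Each such step enlarges the ambient dimension from $n$ to $2n+1$. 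The paper therefore inducts on the number $r\le F$ of right products and proves the bounds in the form $O_{F,r}(1)$ and $\poly_{F,r}(D)$.

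By contrast, the polynomial count of construction steps that you single out as the main obstacle is read off directly from the construction-sequence description. It is not where the difficulty lies.
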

\begin{proof}
	It suffices to treat the case $k=1$.
	Let $S\in\an{\structure}_{\format,\degree}$.
	As in \Cref{rem: algebraic hypersurfaces,rem: construction sequence}, we have that $S$ is obtained by a sequence of applications of the axioms (\s 1)\nobreakdash --(\s 4) of \so-minimality, as described in~\cite[Definition 2.3]{BinyaminiCarmonNovikovComplexCells}, to generating sets of the form $Z\cap\rReal \cell$, for an analytic hypersurface $Z$ and a real complex cell $\cell$ as in \Cref{def: ag structure}.
	We proceed by induction on the number of these applications, which is determined by $\format$ and $\degree$, where the base case corresponds to one of the sets $Z\cap\rReal \cell$ (cf.\ the notion of \emph{structure trees} in \cite[Section 5.2]{BinyaminiNovikovZak2022}).

	Denote by $r$ the number of steps in such a sequence corresponding to forming a cartesian product with $\rReal$ on the right.
	Since $r\leq \format$, it is enough to prove the lemma with the bounds $\polyfd$ and $\order[\format]{1}$ replaced by $\polyfd[\format,r][\degree]$ and~$\order[\format,r]{1}$, respectively.

	If $S=Z\cap \rReal\cell$ for a symmetric analytic hypersurface $Z$ of a real complex cell $\ext{\cell}{1/2}$, the result is clear.
		
	For the inductive step, we note that the result follows immediately for complements, projections and (finite) unions and intersections of sets for which the conclusion of the lemma is known to hold.
	It is thus enough to consider the following case.
	Let $S\subset\rReal^n$ be in $\an{\structure}_{\format,\degree}$ and satisfying the conclusion of the lemma.
	Assume also that its corresponding construction sequence  has less than $r$ cartesian products with $\rReal$ on the right.
	We wish to establish the conclusion of the lemma for the sets $\rReal\times S$ and $S\times\rReal$.

	If $S=Z\cap \rReal\cell$ in the notation as above, then the result follows by noting
	\begin{align}
		\begin{split}
			\rReal\times (Z\cap \rReal\cell)&=(\cComplex\times Z)\cap\rReal(\cComplex\odot\cell),\\
			(Z\cap \rReal\cell)\times\rReal&=(Z\times \cComplex)\cap\rReal(\cell\odot \cComplex).
		\end{split}
	\end{align}

	It is easy to check that taking a product with $\rReal$ (on either side) commutes with complements, intersections and unions, and that taking a product with $\rReal$ on the left commutes with projection operators.
	Hence it remains to consider the following case.
	Let $S\subset\rReal^n$ be as above and consider the set $\pi(S)\times\rReal$, where $\pi$ is the natural projection operator omitting the last coordinate.
	This set is equal to
	\begin{equation}
		\{(x_1,\dots,x_{n})\st\exists x_{n+1}.(x_1,\dots,x_{n-1},x_{n+1}\in S)\}.
	\end{equation}
	This is the same as the set obtained by applying the projection operator $n+1$ times to
	\begin{equation}
		(\rReal^{n+1}\times S)\cap\{x_1=x_{n+2}\}\cap\cdots\cap\{x_{n-1}=x_{2n}\}\cap\{x_{n+1}=x_{2n+1}\},
	\end{equation}
	which reduces this case to the previously considered cases.
\end{proof}

\begin{lemma}
	\label{lem: simultaneous cover}
	Let $\sigma>0$ and let $\{\ext*{\cell_i\odot\cell[F]_i}{1/2}\}_{i=1}^k$ be a finite collection of real complex cells of length $\ell+1$, each of them in $\FD[\format][\degree]$.
	Then there exists a cylindrical simultaneous cellular cover $\{f_j:\hExt*{\widehat{\cell}_j\odot\widehat{\cell[F]}_j}{\sigma}\to\cComplex^{\ell+1}\}$ of $\{\ext*{\cell_i\odot\cell[F]_i}{1/2}\}$, which is of size $\polyfd[\format*][\degree,k,1/\sigma]$ and such that each $f_j$ is in $\FD*[\format*][\degree]$.
\end{lemma}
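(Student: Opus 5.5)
We argue by induction on the length $\ell+1$, building the cover fiber by fiber so that it is cylindrical by construction. For $\ell+1=0$ there is nothing to prove. For the inductive step, apply the induction hypothesis to the bases $\{\ext{\cell_i}{1/2}\}_{i=1}^k$, with some extension parameter $\sigma'$ to be fixed later (polynomially related to $\sigma$). This gives a cylindrical simultaneous cellular cover $\{g_m:\hExt{\widehat\cell_m}{\sigma'}\to\cComplex^\ell\}$ of the bases, of size $\polyfd[\format*][\degree,k,1/\sigma']$, each $g_m$ in $\FD*[\format*][\degree]$.

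Fix $m$ and let $I_m$ be the set of indices $i$ for which $g_m(\widehat\cell_m)$ meets $\cell_i$. By the simultaneity property, $g_m$ maps $\hExt{\widehat\cell_m}{\sigma'}$ into $\ext{\cell_i}{1/2}$ for every $i\in I_m$. We may therefore pull back the fibers: $\pullback{g_m}\cell[F]_i$ is a fiber over $\hExt{\widehat\cell_m}{\sigma'}$, of one of the types point, disc, annulus or punctured disc. Its defining radii and centers are $\radius\comp g_m$, and so on, and they have complexity $\polyfd$.

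The key step is to cover, uniformly over the base, the finite family of pulled-back fibers $\{\pullback{g_m}\ext{\cell[F]_i}{1/2}\}_{i\in I_m}$ together with the complement $\rReal\setminus\bigcup_i\rRealPos\cell[F]_i$. We want fibers $\widehat{\cell[F]}$ each of which either lies inside the $1/2$-extension of every $\cell[F]_i$ it meets, or is disjoint from all of them. The plan is to consider the finitely many real holomorphic functions on the base that determine the boundaries: the centers, and the real endpoints $c_i\pm r_i$, $c_i\pm r_{i,1},\dots$. Their pairwise differences, together with the differences between these and the $1/2$-extended boundaries, define $\order{k^2}$ symmetric analytic hypersurfaces of $\hExt{\widehat\cell_m}{\sigma'}$. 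We then apply the cylindrical \s CPT/\s CPrT \cite[Theorems 2.37, 2.39]{BinyaminiCarmonNovikovComplexCells} to these hypersurfaces. Over each resulting prepared cell, all these functions are nonvanishing, so they have constant sign and their ratios are bounded. Over such a cell, the real ordering of all the boundary points is fixed, and we cut the real line at these points into intervals. Each interval, or each point, is realized as the real part of a complex fiber: a disc, an annulus centered at a chosen boundary point, or $\puncDisc*$ for the unbounded pieces, after suitable translation and scaling, which is a real cellular map. By choosing a fine subdivision (for instance, geometric annuli with ratio depending on $\sigma$, giving $\order{1/\sigma}$ pieces per gap), each piece admits a $\sigma$-extension contained in the $1/2$-extension of any $\cell[F]_i$ that it meets. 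The resulting fibers over a common base automatically cover $g_m(\rRealPos\widehat\cell_m)\times\rReal$, which gives cylindricity, and together they form real cellular covers of each $\ext*{\cell_i\odot\cell[F]_i}{1/2}$. The counts are $\polyfd[\format*][\degree,k,1/\sigma]$ at each level, so the total stays polynomial because the depth is $\ell=\order[\format]{1}$.

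We expect the main obstacle to be this fiberwise comparison: showing that after preparation, the complex $\sigma$-extensions of the new fibers really land inside the $1/2$-extension of the old ones uniformly over the complex base, not merely over its real part. The first tool we would try is the hyperbolic (Schwarz–Pick-type) bounds from the \s CPrT, which make ratios of nonvanishing functions nearly constant on $\hExt{\cdot}{\sigma'}$. For this we would take $\sigma'$ a sufficiently small constant multiple of $\sigma$, so that the variation of the boundary functions over the complex base is absorbed by the $\sigma$-margin. We would then combine this with the refinement theorem \cite[Theorem 2.30]{BinyaminiCarmonNovikovComplexCells}, at polynomial cost, to pass from $\sigma'$ back to extension $\sigma$.
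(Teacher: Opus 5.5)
You have the same skeleton as the paper: induction on the length, pulling fibers back over a simultaneous cover of the bases, a \s CPT step separating the boundary functions, and a final refinement. But there is a genuine gap. The step you flag as the main obstacle is the actual content of the lemma, and the tools you propose for it do not work.

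Simultaneity is a condition on the complex fibers over every point of the complex base. What matters is therefore how the moduli $|r_a(w)|$ compare for complex $w$. Making $r_a-r_b$ nonvanishing fixes their order only on the real part. It does not make ratios bounded. Nor does a Schwarz--Pick argument make ratios of nonvanishing functions nearly constant: the monomialization lemma only gives $\varZ^{\boldsymbol{\alpha}}U$ with $U$ nearly constant.

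For example, over a base fiber $D_\circ(1/4)$, the radii $w$ and $1$ are nonvanishing, and so are their difference and sum. Yet $|1/w|$ is unbounded, so the gap $\{|w|<|z|<1\}$ cannot be cut into $O(1/\sigma)$ geometric annuli uniformly over the base.

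The missing ingredient is the hyperbolic input behind the clustering of \cite[Section 6.3]{BinyaminiNovikov2019}: the fundamental lemma for $\cComplex\setminus\{0,1\}$, applied to $r_a/r_b$, which omits $0$ and $1$ after the discriminant step. On a base admitting a $\{\sigma'\}$-extension, it decides uniformly over the complex base whether two radii have comparable moduli or are separated by a large multiplicative gap.

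This is what the paper uses, in a simpler form. All fibers of complex cells are centered at $0$, so there are no centers $c_i$ to track. There is also no need to cut the real line or to use translated discs. The paper proceeds as follows:
\begin{itemize}
\item The \s CPT step makes the radii pairwise distinct and gives the base cells $\{\sigma'\}$-extensions with $1/\sigma'=\poly(k)$.
\item The radii are clustered around $0$ as in \cite[Section 6.3]{BinyaminiNovikov2019}, with gap parameter $\gamma=1-1/O(k)$ and $\gamma^{O(k)}>1/2$.
\item By \cite[Proposition 56]{BinyaminiNovikov2019}, the resulting fibers are all centered at $0$, together cover $\cComplex$, and have $\gamma$-extensions contained in $\ext{\cell[F]_i}{1/2}$ whenever they meet $\cell[F]_i$.
\item Only after this containment is established for the mild $\gamma$-extension does one refine to cells with $\{\sigma\}$-extensions. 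These map into $\ext{\widehat{\cell[F]}}{\gamma}$ and so inherit the containment.
\end{itemize}
Asking directly for $\{\sigma\}$-extended pieces inside $\ext{\cell[F]_i}{1/2}$, as in your sketch, is problematic. Such an extension of an annulus spreads over a factor $e^{\Omega(1/\sigma)}$ in modulus.

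A minor point: define your index set $I_m$ using $g_m(\hExt{\widehat\cell_m}{\sigma'})$ rather than $g_m(\widehat\cell_m)$. The simultaneity condition is triggered by the extended image.
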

\begin{proof}
	We may assume without loss of generality that the collection $\{\cell_i\odot\cell[F]_i\}$ contains the $2^{\ell+1}$ real complex cells obtained by writing each coordinate of $\cComplex^{\ell+1}$ as $\disc[2]\cup\puncDisc*[1]$ as in~\cite[Remark 2.33]{BinyaminiCarmonNovikovComplexCells}.
	By induction on $\ell$, we may assume that we have already constructed a cylindrical simultaneous cellular cover ${\{f_j:\ext{\widehat{\cell}_j}{1/2}\to\cComplex^\ell\}}$ of the collection $\{\ext{\cell_i}{1/2}\}$ with the stated size and complexity bounds.
	Let $\Sigma$ be the collection of pairs $(i,j)$ such that $f_j(\ext{\widehat\cell_j}{1/2})\subset \ext{\cell_i}{1/2}$.
	For $(i,j)\in\Sigma$, the radii defining $\cell[F]_i$ pull back to real holomorphic functions defined on $\ext{\widehat\cell_j}{1/2}$.
	By renaming, we will assume from now on that these radii and fibers are defined over $\ext{\widehat\cell_j}{1/2}$.

	As in~\cite[Section 3.1]{BinyaminiCarmonNovikovComplexCells}, we may apply the \s CPT to $\ext{\widehat\cell_j}{1/2}$ and an appropriate discriminant set in order to reduce to the case where the radii defining the fibers $\cell[F]_i$ are pairwise distinct over $\ext{\widehat\cell_j}{1/2}$, for all $(i,j)\in\Sigma$.
	By this application of the \s CPT, we may also assume that the cells $\widehat\cell_j$ admit $\hyperbolicParameter{\sigma'}$-extensions for some value of $\sigma'$ to be specified later, as long as $1/\sigma'=\polyfd[\format,\ell][\degree,k]$.
	We will assume for now only that $\hyperbolicParameter{\sigma'}<1/2$.
	In particular, this upper bound allows us to suppress the dependence on $\sigma'$ from our estimates, by \Cref{cor: rho extension complexity}.

	We apply the clustering constructions of~\cite[Section 6.3]{BinyaminiNovikov2019} to the radii of the fibers $\cell[F]_i$, viewed as functions defined on $\hExt{\widehat\cell_j}{\sigma'}$ (we note that, since these radii are all already univalued, there is no need to pass to a $\nu$-cover in the sense of~\cite[Section 2.6]{BinyaminiNovikov2019}).
	We cluster these radii around $0$ over the base $\ext{\widehat\cell_j}{1/2}$ using a gap parameter $\gamma=1-\frac{1}{\order[\ell]{k}}$ such that $\gamma^{\order[\ell]{k}}>1/2$.
	By~\cite[Proposition 56]{BinyaminiNovikov2019}, this is possible with an appropriate choice of $1/\sigma'=\polyfd[\ell][k]$.
	In the notation of~\cite{BinyaminiNovikov2019}, we obtain the clustering fibers $\{\cell[F]_{0,q}\}$ and $\{\cell[F]_{0,q+}\}$ which are well defined over the base $\ext{\widehat\cell_j}{1/2}$ and together cover $\cComplex$.
	Let $\widehat{\cell[F]}$ denote one of the fibers constructed in this way and assume its $\gamma$-extension intersects the fiber $\cell[F]_i$.
	Our choice of $\gamma$ and~\cite[Proposition 56]{BinyaminiNovikov2019} imply in this case that $\ext{\widehat{\cell[F]}}{\gamma}$ is contained in $\ext{\cell[F]_i}{1/2}$, uniformly over the base $\ext{\widehat\cell_j}{1/2}$.

	The collection of maps $f_j\odot \idmap:\ext{\widehat{\cell}_j}{1/2}\odot\ext{\widehat{\cell[F]}}{\gamma}\to \cComplex^{\ell+1}$ is thus a cylindrical simultaneous cover of $\{\ext*{\cell_i\odot\cell[F]_i}{1/2}\}$.
	Finally, by using the sharp refinement theorem, we refine the cells $\ext{\widehat{\cell}_j}{1/2}\odot\ext{\widehat{\cell[F]}}{\gamma}$ to cells admitting $\hyperbolicParameter{\sigma}$-extensions.
\end{proof}

\begin{proof}[Proof of \Cref{thm: real parameterization}]
	By the \s CPrT~\cite[Theorem 2.39]{BinyaminiCarmonNovikovComplexCells}, it is enough to consider the case where $\hyperbolicParameter{\sigma}=1/2$.
	By \Cref{lem: absorb product with R}, there exist $\polyfd[\format][\degree,k]$ symmetric analytic hypersurfaces $Z_{i,j}$ lying in real complex cells $\ext{\cell_{i,j}}{1/2}\subset\cComplex^{\ell_{i,j}}$, where $\ell_{i,j}=\order[\format]{1}$, such that each of the sets $S_i$ in the statement of the theorem is obtained by iterated projections, complements, and boolean combinations of the sets $Z_{i,j}\cap\rReal\cell_{i,j}$ and such that $Z_{i,j}$ and $\cell_{i,j}$ are in $\FD*[\format][\degree]$.

	We may assume all $\ell_{i,j}$ are equal to $\ell=\order[\format]{1}$ by taking cartesian products of the relevant cells and analytic hypersurfaces with copies of $\cComplex$.
	Similarly, we may assume without loss of generality that the hypersurfaces $Z_{i,j}$ include also all coordinate hyperplanes and the graphs of all radii functions defining the complex cells $\cell_{i,j}$, as well as their additive inverses (cf.\ the notion of boundary equations in~\cite[Section 6.1]{Binyamini2024}).
	Let $Z_\alpha\subset \ext{\cell_\alpha}{1/2}$ be the resulting collection of hypersurfaces and real complex cells.
	By \Cref{cor: cylindrical cover and generated sets}, it is enough to consider the case where the collection $\{S_i\}$ is replaced by the collection $\{Z_{\alpha}\cap\rReal\cell_{\alpha}\}$.

	Applying \Cref{lem: simultaneous cover}, we obtain a cylindrical simultaneous cover $\{f_{\beta}:\ext{\widehat{\cell}_\beta}{1/2}\to\cComplex^\ell\}$ of $\{\ext{\cell_\alpha}{1/2}\}$ which is of size $\polyfd[\format][\degree,k]$ and such that each $f_{\beta}$ is in $\FD*[\format][\degree]$.
	We pull back to each $\ext{\widehat{\cell}_\beta}{1/2}$ those analytic hypersurfaces $Z_\alpha$ corresponding to cells $\ext{\cell_\alpha}{1/2}$ such that $f_\beta(\ext{\widehat{\cell}_\beta}{1/2})\subset \ext{\cell_\alpha}{1/2}$.

	We may now apply the real \s CPT in each cell $\ext{\widehat{\cell}_\beta}{1/2}$ with respect to the corresponding collection of $\polyfd[\format][\degree,k]$ analytic hypersurfaces to obtain the desired cellular cover.
	Indeed, if $g:\hExt{\cell}{\sigma}\to\cComplex^\ell$ is one of the resulting real cellular maps and $g(\rRealPos\hExt{\cell}{\sigma})\cap (Z_\alpha\cap\rReal\cell_\alpha)\neq \emptyset$, then the compatibility of the maps constructed in the \s CPT with the relevant pullback of $Z_\alpha$ implies that $g(\hExt{\cell}{\sigma})\subset Z_\alpha$ and the compatibility with the pullbacks of the radii functions determining $\cell_\alpha$ implies that $g(\rRealPos\hExt{\cell}{\sigma})\subset \rReal\cell_\alpha$.
\end{proof}

\section{Smooth parameterizations and point counting}
Using \Cref{thm: real parameterization} in place of~\cite[Corollary 34]{BinyaminiNovikov2019}, the arguments of~\cite[Sections 9 and 10]{BinyaminiNovikov2019} extend directly to the \so-minimal setting upon replacing all bounds of the forms $\polyl(\beta)$ and $\order[\ell]{1}$ by $\polyfd[\format,\ell][\degree]$ and $\order[\format,\ell]{1}$, respectively.

In particular, we obtain the following version of~\cite[Theorem 1]{BinyaminiNovikov2019}, establishing a \so-minimal version of the Yomdin--Gromov lemma on $C^r$-smooth parameterizations (see~\cite[3.3]{Gromov1987}) for analytically generated structures, with polynomial dependence on $r$ and on the degrees of the parameterized sets.

\begin{theorem}
	\label{thm: sharp yomdin gromov}
	Let $\{\FD[\format][\degree]\}$ be an analytically generated \so-minimal structure with \s CD.
	Let $X\subset [0,1]^n$ be of dimension $\mu$ and in $\FD[\format][\degree]$.
	Let $r\in\nNatural$.
	Then there exists a collection of $\polyfd\cdot r^\mu$ maps $\varphi_i:(0,1)^\mu\to X$, each of them in $\FD*[\format][\degree,r]$, such that $X=\bigcup_i \varphi_i((0,1)^\mu)$.
	Furthermore, for every $\boldsymbol{\alpha}\in \nNatural^\mu$ such that $\alpha_1+\cdots+\alpha_\mu\leq r$, the partial derivative of order $\boldsymbol{\alpha}$ of every $\varphi_i$ exists and is bounded uniformly by $\alpha_1!\cdots\alpha_\mu !$.
\end{theorem}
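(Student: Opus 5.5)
We follow the strategy of~\cite[Sections 9 and 10]{BinyaminiNovikov2019}, with \Cref{thm: real parameterization} in place of~\cite[Corollary 34]{BinyaminiNovikov2019}.

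\emph{Step 1: reduction to cellular pieces.} Since the structure is analytically generated, we may regard $X$ as a set in $\an{\structure}_{\order[\format]{1},\polyfd}$. We apply \Cref{thm: real parameterization} to $X$ together with the coordinate hyperplanes and the faces of $[0,1]^n$, with a fixed extension parameter $\sigma=1/2$. This gives $\polyfd$ prepared real cellular maps $f_j:\ext{\cell_j}{1/2}\to\cComplex^n$ in $\FD*[\format][\degree]$. Their restrictions to $\rRealPos\cell_j$ are compatible with $X$ and cover $\rReal^n$. Consequently $X$ is the union of the images $f_j(\rRealPos\cell_j)$ of those maps whose images meet $X$. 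For each such $j$, the real part $\rRealPos\cell_j$ is a product of real intervals and points with dimension $\mu_j\leq\mu$. By \Cref{cor: images of real cellular maps}, the graphs of $\restrict{f_j}{\rRealPos\cell_j}$ lie in $\an{\structure}_{\order[\format]{1},\polyfd}$, so they have the required complexity.

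\emph{Step 2: analytic estimates on a single cell.} Fix one such $f_j$. Because $f_j(\rRealPos\cell_j)\subset[0,1]^n$ and $f_j$ is holomorphic on the $1/2$-extension, we want a uniform bound on $f_j$ over the whole extension $\ext{\cell_j}{1/2}$. We would argue as in~\cite{BinyaminiNovikov2019}. Prepared maps have components that are monomial-times-unit or otherwise controlled. Failing that, we add the hypersurfaces $\{|z_k|=2\}$, or rather the complementary cells from the $\disc[2]\cup\puncDisc*[1]$ splitting, to the input, so that every $f_j$ maps into a bounded polydisc. Then:
\begin{itemize}
	\item We reparametrize each coordinate of $\rRealPos\cell_j$ by a standard map from $(0,1)$: affine on disc or annulus fibers, and $t\mapsto e^{-1/t}$-type or power-type maps on punctured discs.
	\item The extension gives a fixed-size complex neighbourhood. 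Cauchy estimates then bound the derivatives of order $\boldsymbol{\alpha}$ by $C^{|\boldsymbol{\alpha}|}\boldsymbol{\alpha}!$.
\end{itemize}

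\emph{Step 3: subdivision.} We subdivide $(0,1)^{\mu_j}$ into $\order{C^{\mu}r^{\mu}}$, or more carefully $\polyfd\cdot r^\mu$, subcubes. Composing with affine maps of contraction ratio about $1/r$ absorbs the constant $C^{|\boldsymbol{\alpha}|}$ and yields the bound $\boldsymbol{\alpha}!$ for $|\boldsymbol{\alpha}|\leq r$. The factor $r^\mu$ is exactly this step, and the affine maps cost only $\polyfd[\format][\degree,r]$ in complexity. Lower-dimensional pieces ($\mu_j<\mu$) are padded by a projection from $(0,1)^\mu$. Summing over the $\polyfd$ maps $f_j$ gives the claimed count.

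The main obstacle is Step~2 near punctured-disc fibers (and $\puncDisc*$ fibers at infinity). There the hyperbolic extension yields only logarithmic neighbourhoods, so naive Cauchy estimates fail near the puncture. We would first follow~\cite[Section 9]{BinyaminiNovikov2019}: pass to the universal cover or logarithmic coordinates, where a $\{1/2\}$-extension becomes a strip of fixed width. The issue then is that the parameter domain in logarithmic coordinates is unbounded. This is handled with the $\nu$-th power maps $z\mapsto z^\nu$, $\nu\sim r$, together with the boundedness of $f_j$, to get derivative bounds uniform up to the puncture. This is the place where sharpness requires checking that all auxiliary maps remain in $\FD*[\format][\degree,r]$.
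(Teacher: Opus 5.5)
Your Step 1 is the same reduction the paper makes: view $X$ in the analytically generated filtration, apply \Cref{thm: real parameterization}, and keep the maps whose real images meet $X$. The paper then imports the analytic part of \cite[Sections 9 and 10]{BinyaminiNovikov2019} unchanged. The gap is in your reconstruction of that analytic part, which carries all the content, and as written Steps 2--3 fail.

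\emph{Annulus fibres, not punctured discs, are the obstacle.} It is not true that an affine parametrization of an annulus fibre, together with Cauchy estimates on the extension, gives bounds $C^{|\boldsymbol\alpha|}\boldsymbol\alpha!$. Around a real point $x$ of $A(r_1,r_2)$ the extension contains only a disc of radius $\asymp x$, and $r_1/r_2$ is not bounded below over the family. For instance, parametrizing the hyperbola $xy=\varepsilon$ produces a fibre like $A(\varepsilon,1)$ carrying the bounded function $\varepsilon/x$. Its $k$-th derivative in the affine parameter at the inner end is $\asymp k!\,\varepsilon^{-k}$. Punctured-disc fibres, which you single out, are harmless: a bounded univalued holomorphic function extends over the puncture, so the linear parametrization $x_k=r(x')s$ has a complex neighbourhood of fixed size in $s$.

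\emph{Your remedy also fails on annuli.} With $x=r_2t^\nu$, the function $\varepsilon t^{-\nu}$ has $k$-th derivative at least $\nu^k\varepsilon^{-k/\nu}$ at $t=\varepsilon^{1/\nu}$. This is unbounded as $\varepsilon\to0$ for fixed $\nu\sim r$.

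What is missing is a lemma of the following form. The real part of a cell with a fixed extension is covered by $O(r^\mu)$ maps $(0,1)^\mu\to\mathbb{R}_+\mathcal{C}$, of complexity polynomial in $r$, such that every function holomorphic and bounded by $1$ on the extension satisfies $|\partial^{\boldsymbol\alpha}|\le\boldsymbol\alpha!$ for $|\boldsymbol\alpha|\le r$ after composition. In one variable you can get this as follows. Split $A(r_1,r_2)$ at $\sqrt{r_1r_2}$ and write $f=f_++f_-$ via the Laurent expansion. Substitute $x=r_2t^\nu$ on the outer half and $x=r_1t^{-\nu}$ on the inner half. For $k\le\nu$ all $k$-th $t$-derivatives are then $\lesssim k!\,(C\nu)^k$, and cutting the $t$-interval into $O(\nu)=O(r)$ pieces gives the bound. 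Doing this uniformly over the base is the content of \cite[Sections 9 and 10]{BinyaminiNovikov2019}; note the radii depend on earlier coordinates, so $\mathbb{R}_+\mathcal{C}_j$ is not a product of intervals.

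This also shows Step 3 misattributes the factor $r^\mu$. If Step 2 held with $C$ independent of $r$, subcubes of side $1/C$ would suffice and the count would not depend on $r$ at all. The $r^\mu$ comes from the order-$r$ power substitutions.

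Two smaller points.
\begin{itemize}
\item $e^{-1/t}$-type maps are not definable in general (e.g.\ in $\mathbb{R}_{\mathrm{alg}}$), so they would violate $\varphi_i\in\mathcal{S}_{O_F(1),\operatorname{poly}_F(D,r)}$.
\item Boundedness of $f_j$ on the whole extension is not part of the statement of \Cref{thm: real parameterization}. It has to be read off from the proof of \Cref{lem: simultaneous cover}. There, the cells of the splitting $D(2)\cup D_\infty(1)$ and the second condition of a simultaneous cover force the extension of every map whose real image meets $[0,1]^n$ into $D(4)^n$. ``Prepared components are monomial times unit'' does not give this.
\end{itemize}
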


This implies Wilkie's conjecture on polylogarithmic point counting for $\structure$, as we now explain.
For $X\subset\rReal^n$, we denote by $X(g,H)$ the set of algebraic points in $X$ with degree at most $g$ and multiplicative Weil height at most $H$.
We write $X^{\operatorname{alg}}$ for the union of all connected positive-dimensional semialgebraic subsets of $X$, and set $X^{\operatorname{trans}}=X\setminus X^{\operatorname{alg}}$.
For finite $X$, we write $\# X$ for the number of points in $X$.

\begin{theorem}
	\label{thm: Wilkie conjecture}
	Let $\{\FD[\format][\degree]\}$ be an analytically generated \so-minimal structure with \s CD and let $X\in\FD[\format][\degree]$.
	Then
	\begin{equation}
		\#X^{\operatorname{trans}}(g,H)=\polyfd[\format][\degree, g, \log H].
	\end{equation}
\end{theorem}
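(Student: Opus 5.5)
We follow the strategy of Binyamini--Novikov--Zak~\cite{BinyaminiNovikovZak2024} (itself modelled on Pila--Wilkie), using \Cref{thm: sharp yomdin gromov} as the source of smooth parameterizations. The bound is polynomial in $\log H$ rather than in $H^\epsilon$. To get this, the parameterization has to be taken with smoothness $r$ comparable to $\log H$, and the number of charts has to stay polynomial in $r$. \Cref{thm: sharp yomdin gromov} gives exactly this, namely $\polyfd\cdot r^\mu$ charts, each of complexity $\polyfd[\format][\degree,r]$.

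\textbf{Reductions.} First, by a standard argument we reduce to $X\subset[0,1]^n$. We cover $\rReal^n$ by the $2^n$ regions where each coordinate satisfies either $|x_i|\le1$ or $|x_i|\ge1$, and apply $x_i\mapsto 1/x_i$ on the latter. These maps are semialgebraic of bounded complexity and preserve degree and height of algebraic points, so the format and degree change by $\order[\format]{1}$ and $\polyfd$ respectively. Next, we argue by induction on $\mu=\dim X$.

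\textbf{Main step.} Fix $g,H$ and choose $r$ with $r\sim\log H$ (polynomially in $g$). Apply \Cref{thm: sharp yomdin gromov} to $X$ to get $\polyfd\cdot r^\mu$ charts $\varphi_i$ with $C^r$-norms bounded by $\alpha!$. On each chart, subdivide $(0,1)^\mu$ into $\poly(r)$ subcubes. The key input is the Bombieri--Pila/Pila--Wilkie determinant estimate, in the version with degree-$g$ algebraic points: for a polynomial degree $d=\poly(g,\mu,n)$, every subcube of side $\lesssim H^{-\order{1}}$ has its points of $X(g,H)$ contained in a hypersurface $\{P=0\}$ with $\deg P\le d$. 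With the sharp derivative bound $\alpha!$ and the choice $r\sim\log H$, the number of subcubes needed per chart is polynomial in $\log H$; this is the computation from the proof of~\cite[Theorem~3]{BinyaminiNovikovZak2024}, now run with our \Cref{thm: sharp yomdin gromov} in place of their Lemma~2. Altogether, $X(g,H)$ is covered by $\polyfd[\format][\degree,g,\log H]$ sets of the form $\varphi_i(\text{subcube})\cap\{P=0\}$. Each such set is definable in $\FD*[\format][\degree,g,\log H]$, and since $\deg P=\poly(g)$ its degree grows only polynomially.

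\textbf{Induction and descent.} On each piece $Y=X\cap\{P=0\}$ restricted to a chart image, either $\dim Y<\mu$, or $Y$ contains a $\mu$-dimensional open part of $X$ inside the algebraic hypersurface. In the first case we apply induction to $Y$. Here we need $Y^{\operatorname{trans}}\supset X^{\operatorname{trans}}\cap Y$, which fails in general, so we follow the Pila--Wilkie/BNZ ``blocks'' formulation. We prove instead that $X(g,H)$ is covered by $\polyfd[\format][\degree,g,\log H]$ connected definable \emph{blocks*}, where each block is either a point or a positive-dimensional connected semialgebraic set contained in $X$. The points lying in positive-dimensional blocks then lie in $X^{\operatorname{alg}}$, and the count of singleton blocks gives the theorem. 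The second case, where $X$ is locally contained in a hypersurface, is handled as in~\cite{BinyaminiNovikovZak2024}. We pass to the family of intersections with algebraic sets of degree $\le d$, parameterized by coefficients. This keeps everything in an analytically generated \so-minimal structure with uniformly polynomial complexity, and we induct on the pair (dimension, codimension in the Zariski closure) until we reach semialgebraic blocks.

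\textbf{Main obstacle.} The main obstacle is this uniform family version of the induction. The pieces at each level must stay in $\FD*$ with degree polynomial in $\degree,g,\log H$, even though the number of recursion steps is bounded only by $\order[\format]{1}$. To control this, I would invoke the \s CD together with \Cref{cor: ag so cd inheritence}, which ensures that the auxiliary structures (obtained by adding semialgebraic parameters) remain analytically generated with \s CD. Then \Cref{thm: sharp yomdin gromov} applies at every stage with bounds that compose polynomially.
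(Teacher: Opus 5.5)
Your top-level plan, rerunning the proof of \cite[Theorem 1]{BinyaminiNovikovZak2024} with \Cref{thm: sharp yomdin gromov} in place of their Lemma~2, is exactly the paper's proof. However, the quantitative core of your main step is wrong as written, and that is precisely where the polylogarithmic bound comes from.

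You fix the degree of the interpolating hypersurfaces at $d=\poly(g,\mu,n)$, independent of $H$, and use subcubes of side $H^{-O(1)}$. That already gives $H^{\Omega(1)}$ subcubes per chart, not $\poly(\log H)$. More fundamentally, for fixed $d$ the determinant method only yields a Pila--Wilkie type bound, however large $r$ is. With $D=\binom{n+d}{n}$ monomials, the interpolation determinant on a cube of side $\delta$ is roughly at most $\delta^{b}$ with $b\asymp D^{1+1/\mu}$. On the other hand, say for $g=1$, a nonzero value is at least $H^{-ndD}$. So one needs $\delta\lesssim H^{-ndD/b}$, i.e.\ about $H^{\epsilon(d)}$ cubes with $\epsilon(d)\asymp d\,D^{-1/\mu}$. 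For $\mu<n$ this tends to $0$ only as $d\to\infty$. Moreover, only Taylor order about $D^{1/\mu}$ enters the estimate, so extra smoothness is useless for fixed $d$.

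The correct regime is to let $d$ grow polynomially in $g\log H$, so that $d\,D^{-1/\mu}\log H=O(1)$, and to take $r\approx D^{1/\mu}$, which is again polynomial in $g\log H$. The derivative bounds $\alpha_1!\cdots\alpha_\mu!$ (i.e.\ Taylor coefficients bounded by $1$) then allow cubes of size independent of $H$, so each chart contributes $O_n(1)$ hypersurfaces. This is affordable exactly because \Cref{thm: sharp yomdin gromov} gives $\polyfd\cdot r^\mu$ charts, and the \so-minimality axioms make $X\cap\{P=0\}$ of complexity polynomial in $\degree$ and $\deg P$. So with $\deg P=\poly(g\log H)$ (not $\poly(g)$) the pieces still lie in $\FD*[\format][\degree,g,\log H]$.

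Two smaller corrections. First, for $Y\subset X$ one always has $Y^{\operatorname{alg}}\subset X^{\operatorname{alg}}$, hence $X^{\operatorname{trans}}\cap Y\subset Y^{\operatorname{trans}}$. So, contrary to what you say, pieces of dimension $<\mu$ are handled by plain induction. The genuinely delicate case is when $X\cap\{P=0\}$ has dimension $\mu$; that is where the Pila--Wilkie/BNZ family (or block) argument is needed, and you only defer it. Second, no auxiliary structures enter. By hypothesis $\structure$ is analytically generated with \s CD, so \Cref{thm: sharp yomdin gromov} applies to every set arising in the recursion. Fibres of definable families and intersections with algebraic hypersurfaces are controlled directly by the \so-minimality axioms, so \Cref{cor: ag so cd inheritence} plays no role.
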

\begin{proof}
	The proof is the same as that of~\cite[Theorem 1]{BinyaminiNovikovZak2024}, using \Cref{thm: sharp yomdin gromov} above in place of Lemma 2 of loc.\ cit.
\end{proof}

\begin{remark}
	We note that one may also obtain \Cref{thm: Wilkie conjecture} by a more direct ``complex cellular'' approach, similar to that of~\cite[Appendix B.1]{BinyaminiNovikov2019}, without using $C^r$-smooth parameterizations.
	The argument presented there, using a complex cellular version of the Bombieri--Pila determinant method, is only strong enough to give good bounds in the case $g=1$.
	The general case follows by a similar approach, replacing the use of interpolation determinants by the construction of an auxiliary polynomial whose size is bounded from above in terms of its degree and the height of its coefficients.
	This upper bound is then compared against a lower bound coming from Liouville's inequality to obtain the main interpolation result needed for the inductive step of the proof.

	We follow this strategy in~\cite{BinyaminiCarmonNovikovLE} to establish Wilkie's conjecture for the structure $\structure(\exp)$, obtained from an analytically generated \so-minimal structure $\structure$ (containing $\restrict{\exp}{[0,1]}$) by adjoining the graph of the unrestricted real exponential function.
	In this setting, we do not have an analog of \Cref{thm: sharp yomdin gromov} and additional features of the complex cellular approach turn out to be crucial.	
\end{remark}

\section{Preparation theorem}
\label{sec: preparation theorem}
Let $\{\FD[\format][\degree]\}$ be an analytically generated \so-minimal structure with \s CD.
Similarly to~\cite[Section 4.2]{BinyaminiNovikov2019}, we may use \Cref{thm: real parameterization} in place of~\cite[Corollaries 34 and 35]{BinyaminiNovikov2019} to obtain sharp versions of the subanalytic preparation theorems of Parusinski~\cite{Parusinski1994} and Lion--Rolin~\cite{LionRolin1997} (see also~\cite[Theorem 2.3]{vdDriesSpeisseger2002}).
We explain this in this section.

The following is an analog of the monomialization lemma of~\cite[Lemma 17]{BinyaminiNovikov2019}.
\begin{lemma}[Monomialization lemma]
	\label{lem: monomialization}
	Let $\rho>0$, let $\hExt{\cell}{\rho}$ be a complex cell of length $\ell$ and let $f:\hExt{\cell}{\rho}\to\cComplex\setminus\{0\}$ be a holomorphic map in $\FD[\format][\degree]$.
	Then $f=\varZ^{\boldsymbol{\alpha}}\cdot U(\varZ)$, where $\boldsymbol{\alpha}\in\zIntegers^\ell$ is such that $\abs{\boldsymbol{\alpha}}<\polyfd$ and where $\log U:\cell\to\cComplex$ is univalued and satisfies
	\begin{align}
		\diam{\log U(\cell)}[\cComplex]&<\polyfd\cdot\rho,\\
		\diam{\Im\log U(\cell)}[\rReal]&<\polyfd.
	\end{align}
	Furthermore, the $i$-th coordinate of $\boldsymbol{\alpha}$ is $0$ whenever the $i$-th coordinate of $\cell$ is of type $\point$ or $\disc$.
\end{lemma}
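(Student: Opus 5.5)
This is the analog of \cite[Lemma 17]{BinyaminiNovikov2019}, and I would follow its proof, replacing each appeal to Bezout-type or analytic finiteness with the \so-minimal counting bounds for sets in $\FD*[\format][\degree]$. The argument is an induction on the length $\ell$, treating the fibers one at a time. I set $\cell=\cell_1\odot\cell[F]$ and work in the last coordinate $z_\ell$ over a fixed base point $\initial{\varZ}{1}{\ell-1}\in\hExt{\cell_1}{\rho}$.

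\emph{Exponent.} If $\cell[F]$ is of type $\point$ or $\disc$, I set $\alpha_\ell=0$. If it is of type $\annulus$ or $\puncDisc$, I set $\alpha_\ell$ equal to the winding number
\[
\frac{1}{2\pi i}\oint_{\abs{z_\ell}=s}\frac{\partial_{z_\ell}f}{f}\,dz_\ell
\]
on a circle inside the fiber. This number is locally constant in the base and in $s$, so by connectedness it is constant. To bound $\abs{\alpha_\ell}$, I note that the argument principle makes it at most the number of zeros of $f-c$ in a disc for suitable $c$. Alternatively, $2\abs{\alpha_\ell}$ is at most the number of points on the circle where $f$ is real and positive. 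That set is the intersection of a circle with a definable set in $\FD*[\format][\degree]$ and is finite, so \so-minimality bounds it by $\polyfd$.

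Next I replace $f$ by $f\cdot z_\ell^{-\alpha_\ell}$. This function is nonvanishing and has zero winding, so $\log(f z_\ell^{-\alpha_\ell})$ is univalued in the fiber variable. The topology of a cell is that of the base times the fiber, so $\pi_1(\cell)$ is generated by the loops of the base and the fiber loop. Hence I can restrict to the section $z_\ell=$ const (or to a graph over the base) and apply the induction hypothesis to obtain $\alpha_1,\dots,\alpha_{\ell-1}$. The exponents of the base coordinates do not depend on the chosen section because they are constant winding numbers. Together these give $f=\varZ^{\boldsymbol{\alpha}}U$ with $\log U$ univalued on $\hExt{\cell}{\rho}$.

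\emph{Diameter bounds.} For $\Im\log U$, I would show that the argument of $U$ varies boundedly. The set where $\Im\log U\in\pi\zIntegers$ is a union of level sets of a function definable in $\FD*[\format][\degree]$. A path in $\cell$ of controlled type can cross these level sets only $\polyfd$ times. This holds because the restriction of $U$ to a real-analytic path (radial segments and circular arcs in each fiber, which are semialgebraic) meets $\{\Im U=0\}$ in $\polyfd$ components. Since any two points of $\cell$ are joined by a concatenation of $O(\ell)$ such arcs, the bound on $\diam{\Im\log U(\cell)}[\rReal]$ follows.

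For the full diameter bound I would use the $\hyperbolicParameter{\rho}$-extension. The function $\log U$ is holomorphic on $\hExt{\cell}{\rho}$ and omits, up to the winding adjustment, the values in $2\pi i\zIntegers+$ const. I expect this to be the main obstacle: converting the bounded-imaginary-part information into a Schwarz--Pick type estimate in hyperbolic distance. On the bounded-imaginary-part region, $\exp$ is essentially a covering of a strip, and a holomorphic map into a strip of width $\polyfd$ has derivative bounded by $\polyfd$ in the hyperbolic metric of $\hExt{\cell}{\rho}$. Since $\cell$ has hyperbolic diameter $O_\ell(\rho)$ inside its $\hyperbolicParameter{\rho}$-extension, as in \cite{BinyaminiNovikov2019}, this gives $\diam{\log U(\cell)}[\cComplex]<\polyfd\cdot\rho$.

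Concretely, I would first prove the imaginary-part bound on the extension $\hExt{\cell}{\rho}$ itself, using the counting argument above. Then I would apply the fundamental lemma on holomorphic maps from cells into the strip $\{\abs{\Im w}<\polyfd\}$, which is \cite[Lemma 17]{BinyaminiNovikov2019} with constants tracked, to pass from $\hExt{\cell}{\rho}$ to $\cell$.
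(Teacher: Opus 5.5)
Your outline follows the same route as the paper, which defers to the proof of \cite[Lemma 17]{BinyaminiNovikov2019} with \so-minimal counting in place of B\'ezout-type bounds. The steps are: winding numbers in the annular and punctured fibers, bounded by counting; univaluedness of $\log U$ from the fiber/base description of $\pi_1$; a counting bound $M=\polyfd$ on the oscillation of $\Im\log U$ over $\mathcal{C}^{\{\rho\}}$; and Schwarz--Pick into the strip $\{\abs{\Im w}<M\}$. The first three steps are essentially fine. For the imaginary part you do not need paths, and your fiberwise semialgebraic arcs cannot join points lying over different base points anyway. It suffices to note that each connected component of $\{U\in\rReal\}\cap\mathcal{C}^{\{\rho\}}$ lies in a single level set $\{\Im\log U=k\pi\}$, and that every intermediate $k$ is attained by connectedness; so the number of components bounds the oscillation. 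The problem is the last step.

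You assert that $\mathcal{C}$ has hyperbolic diameter $O_\ell(\rho)$ inside $\mathcal{C}^{\{\rho\}}$. This is false as soon as some fiber is of type $D_\circ$ or $A$. In $D_\circ(R)$ the hyperbolic density is $1/(\abs{z}\log(R/\abs{z}))$, so $D_\circ(r)$ has infinite diameter there, since the puncture is a cusp. In $A(\delta r_1,\delta^{-1}r_2)$, the distance between the two boundary circles of $A(r_1,r_2)$ is unbounded as $r_2/r_1\to\infty$, roughly $2\log\log(r_2/r_1)$. The $\{\rho\}$-extension controls only two things: the hyperbolic diameter of disc fibers, and the hyperbolic lengths of the circles $\{\abs{z_k}=s\}$ in annular and punctured fibers. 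So Schwarz--Pick into the strip only shows that each fiber circle has image of diameter $O(M\rho)$, and nothing in your argument prevents $\log U$ from drifting in the radial direction.

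The missing idea is to use univaluedness once more.
\begin{itemize}
\item On each extended annular fiber, the mean of $\log U$ over $\{\abs{z_\ell}=s\}$ is the zeroth Laurent coefficient $c_0(\initial{\varZ}{1}{\ell-1})$. It is independent of $s$ and lies in the convex hull of the image of every circle, so the whole fiber maps within $O(M\rho)$ of $c_0$.
\item In a punctured fiber, the bounded imaginary part makes the puncture removable, which reduces to the disc case.
\item Then $c_0$ (or, for a disc fiber, the value at the center) is again a holomorphic map from the extended base into the same strip, and induction on $\ell$ gives the bound $O_\ell(M\rho)$.
\end{itemize}
Finally, you cannot close this step by invoking \cite[Lemma 17]{BinyaminiNovikov2019} ``with constants tracked''. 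That is the monomialization lemma itself, i.e.\ the statement being proved, not an independent lemma about maps from cells into strips.
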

\begin{proof}
	The proof in~\cite[Section 5.5]{BinyaminiNovikov2019} extends directly to the \so-minimal setting.
\end{proof}

We also need the following observation on the prepared maps constructed in the proof of the \s CPT.
\begin{remark}
	\label{rem: no bad annuli}
	In the clustering procedure used in the proofs of the CPT and \s CPT (see~\cite[Section 8.1]{BinyaminiNovikov2019} and~\cite[Section 3.1]{BinyaminiCarmonNovikovComplexCells}), we consider an analytic hypersurface $Z$ in a complex cell and its proper unramified projection $\projbase(Z)$ to the base of this cell.
	We cover the complement to the sections $\{y_i\}$ of this projection (over each point in the base) by \emph{Voronoi cells}, which are translates of complex cells of length $1$, admitting a suitable $\delta$-extension and constructed as follows.
	
	Let $y_{i_0}\in\{y_i\}$ be one of the sections as above.
	Centered around $y_{i_0}$, we have a collection of annuli, punctured discs and disc complements $\{\cell[F]_{i_0,q}\}$.
	This collection partitions a subset of $\{y_i\}$ into \emph{clusters} around $y_{i_0}$ --- we say that two sections are in the same cluster if they lie in the empty region between, say, two consecutive annuli in this collection.
	In addition, we have a collection of discs which cover some portion of these empty regions and whose extensions do not intersect any of the sections $\{y_i\}$ (see \Cref{fig: clustering}).

	We first cluster all non-zero sections around the section $y_0=0$.
	Fibers of type $\puncDisc*$ may only be constructed in this stage.
	All subsequent clustering around any other section $y_i\neq 0$ only involves a small subset of sections which are close to $y_i$ and which are clustered using punctured discs and annuli.
	In particular, any resulting fiber of type $\annulus$ (along with its $\delta$-extension) which is not centered at the origin does not wind around the origin.
	That is, if we replace this annulus by the disc determined by its outer radius, the $\delta$-extension of this disc does not intersect the origin.
\end{remark}

\begin{figure}
	\includegraphics[width=0.66\linewidth]{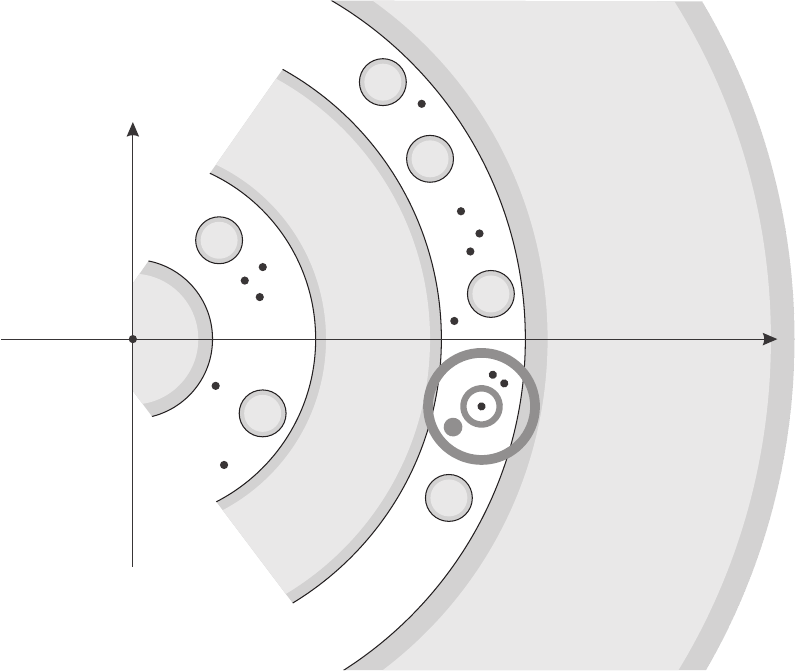}
	\caption{\emph{Clustering sections of the projection $\pi:Z \to \initial{\cell}{1}{\ell}$.}
	Above each point $z\in\initial{\cell}{1}{\ell}$, the complement of the sections $\pi^{-1}(z)$ is covered by discs and annuli whose extensions do not intersect $\pi^{-1}(z)$.
	Annuli centered around one of the sections (e.g.\ the origin) group the remaining sections into clusters.
	The region between two such annuli is covered by discs and by similar configurations of discs and annuli centered around other sections in the cluster.}
	\label{fig: clustering}
\end{figure}

We are now ready to state the main theorem of this section.
For $\varX=(x_1,\dots,x_n)$, we write $\initial{\varX}{1}{k}$ for $(x_1,\dots,x_k)$.
\begin{theorem}
	\label{thm: S-preparation}
	Let $f_1,\dots,f_M : \rReal^n \to\rReal$ be functions in $\FD[\format][\degree]$ and denote the coordinates of $\rReal^n$ by $\varX=(x_1,\dots,x_n)$.
	Let $\mu>0$.
	Then there is a cover of $\rReal^n$ by a cylindrical collection of $\polyfd[\format][\degree,M,1/\mu]$ prepared real cellular maps $\{\phi_j:\ext{\cell_j}{1/2}\to\cComplex^n\}$, each of them in $\FD*[\format][\degree]$ and compatible with the zero-sets of the coordinate functions $x_1,\dots,x_n$, such that for each $j$ we have the following expansion of each of the functions $f_i$ in $\phi_j(\rRealPos\cell_j)$:
	\begin{equation}
		\label{eq: prepared expansion}
		f_i(\varX)=\prod_{k=1}^n \abs{\varX_k-\theta_{j,k}(\initial{\varX}{1}{k-1})}^{\alpha_{i,j,k}}
		\cdot
		U_{i,j}(\varX),
	\end{equation}
	where $\alpha_{i,j,k}\in\qRationals$ is of size at most $\polyfd$ and the functions $\theta_{j,k},U_{i,j}:\phi_{j}(\rRealPos\cell_j)\to\rReal$ are in $\FD*[\format][\degree]$.
	The sign of $U_{i,j}\comp \phi_j$ is constant on $\rRealPos\cell_j$, and, if this sign is not $0$, we have that the diameter of  $\log \abs{U_{i,j}}\comp \phi_j (\rRealPos\cell_j)$ is less than~$\mu$.

	In addition, if $\theta_{j,k}$ is not identically $0$ over $\phi_j(\rRealPos\cell_j)$, then it is nowhere vanishing and we have 
	\begin{equation}
		\label{eq: center condition}
		\abs{\varX_k - \theta_{j,k}(\initial{\varX}{1}{k-1})}\leq \mu\abs{\varX_k}
	\end{equation}
	for all $\varX\in\phi_{j}(\rRealPos\cell_j)$.
	If, moreover, we have $\alpha_{i,j,k}\neq 0$ for some $i$, then we also have that the left-hand side of \eqref{eq: center condition} is nowhere vanishing over $\phi_j(\rRealPos\cell_j)$.
\end{theorem}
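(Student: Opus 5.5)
We follow the strategy of \cite[Section 4.2]{BinyaminiNovikov2019}, with \Cref{thm: real parameterization} replacing \cite[Corollaries 34 and 35]{BinyaminiNovikov2019}. First we apply \Cref{thm: real parameterization} to the graphs $\graph f_i\subset\rReal^{n+1}$ together with the coordinate hyperplanes $\{x_k=0\}$. This produces a cylindrical collection of $\polyfd[\format][\degree,M]$ prepared real cellular maps $g_j:\ext*{\cell_j\odot\cell[F]_j}{1/2}\to\cComplex^{n+1}$ compatible with these sets. By \Cref{lem: cylindrical compatibility}, the base maps $\phi_j=\initial{(g_j)}{1}{n}$ are compatible with each coordinate hyperplane. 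Every real point of $\graph f_i$ lying over $\phi_j(\rRealPos\cell_j)$ is covered by some $g_{j'}$ with the same base whose image lies in $\graph f_i$; since this image is the graph of a section over the base, its fiber must be of type $\point$. Hence $f_i\comp\phi_j$ is the last coordinate of a real cellular map of the form $\phi_j\odot(\text{point})$. Since we work with a prepared map, this coordinate is a real holomorphic function $F_{i,j}$ on $\ext{\cell_j}{1/2}$ in $\FD*[\format][\degree]$. By the \s CPT applied to $\{F_{i,j}=0\}$ and cylindrical refinement, we may further assume each $F_{i,j}$ either vanishes identically or is nowhere zero.

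When $F_{i,j}$ is nowhere vanishing, we refine to $\hyperbolicParameter{\rho}$-extensions with $\rho\sim\mu/\polyfd$ using the \s CPrT; this refinement costs $\polyfd[\format][\degree,1/\mu]$ maps. We then apply \Cref{lem: monomialization}, which gives $F_{i,j}=\varZ^{\boldsymbol\alpha}U(\varZ)$ with $\abs{\boldsymbol\alpha}=\polyfd$ and $\diam{\log U}<\mu$. The next step is to transport this monomial form from cell coordinates $\varZ$ back to the coordinates $\varX$. Because $\phi_j$ is prepared, its $k$-th component has the form $x_k=\theta_{j,k}(\initial{\varX}{1}{k-1})+c\,z_k^{q}$ or $\theta_{j,k}+c\,z_k^{-q}$, where $q$ is a positive integer and $\theta_{j,k}$ is real holomorphic on the base. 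It follows that $\abs{z_k}=\abs{(x_k-\theta_{j,k})/c}^{\pm1/q}$. Substituting this into $\varZ^{\boldsymbol\alpha}$ yields the product in \eqref{eq: prepared expansion} with rational exponents $\alpha_{i,j,k}$ of size $\polyfd$, and the constants $c$ are absorbed into $U_{i,j}$. The sign of $U_{i,j}$ is constant because it is real and nonvanishing on the connected set $\rRealPos\cell_j$. Definability of $\theta_{j,k}$ and $U_{i,j}$ in $\FD*[\format][\degree]$ follows from \Cref{cor: rho extension complexity} and \Cref{prop: real graphs definable}.

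We expect the main obstacle to be the center condition \eqref{eq: center condition} and the nonvanishing of $\theta_{j,k}$. Here we use \Cref{rem: no bad annuli}. If $\theta_{j,k}\neq0$, the fiber is centered at a nonzero section, and the $\delta$-extension of the disc determined by its outer radius avoids the origin. When the extension is taken small enough (a $\{\rho\}$-extension with $\rho$ proportional to $\mu$), this forces $\abs{x_k-\theta_{j,k}}\leq\mu\abs{\theta_{j,k}}$ up to constants, and hence $\abs{x_k-\theta_{j,k}}\leq\mu\abs{x_k}$. It also shows that $\theta_{j,k}$ has no zeros. Achieving this may require rerunning the clustering with gap parameter depending on $\mu$, at a cost polynomial in $1/\mu$. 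Finally, if $\alpha_{i,j,k}\neq0$ the fiber cannot be of type $\disc$ by \Cref{lem: monomialization}; it is therefore $\puncDisc$, $\annulus$, $\puncDisc*$ or $\point$ away from the center, so $x_k-\theta_{j,k}$ does not vanish on $\rRealPos\cell_j$. In the point case the factor is simply omitted by setting $\alpha=0$. Cylindricity is preserved throughout, since all refinements are performed in the cylindrical manner described in the remark following \Cref{def: cylindrical collection}.
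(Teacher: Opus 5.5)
Your proposal is correct and follows essentially the paper's proof. The shared steps are: apply \Cref{thm: real parameterization} to the graphs and coordinate hyperplanes, use cylindricity to see each graph as a point fiber over a common base, apply \Cref{lem: monomialization} with extension parameter of order $\mu/\polyfd$, pass from $\varZ$ to $\varX$ via $\abs{z_k}=\abs{x_k-\theta_{j,k}}^{1/q_k}$, and get the center condition from \Cref{rem: no bad annuli}. The only difference is bookkeeping. The paper invokes \Cref{thm: real parameterization} directly with $\sigma=\polyfd^{-1}\mu$ and obtains nonvanishing of the graph functions from compatibility with the zero set of $x_{n+1}$, whereas you get the same effect through an extra \s CPT step for $\{F_{i,j}=0\}$ followed by a \s CPrT refinement.
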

\begin{figure}
	\includegraphics[width=0.66\linewidth]{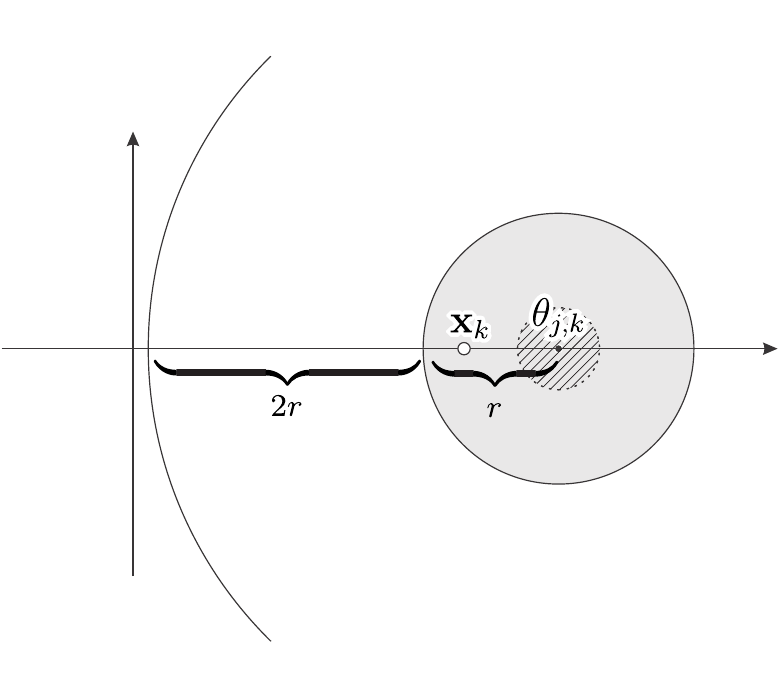}
	\caption{\emph{Condition on a cell with non-zero center.}
	Since the $1/3$-extension of the disc $\disc[r]+\theta_{j,_k}$ does not meet the origin, we have that $\abs{\varX_k-\theta_{j,k}}<r$ and $2r<\abs{\varX_k}$ for all $\varX_k\in\annulus[\cdot][r]+\theta_{j,_k}$.
	Hence $\abs{\varX_k-\theta_{j,k}}<\frac{1}{2}\abs{\varX_k}$.
	In general, for a prepared map $\varX_k=\varZ_k^{q_k}+\theta_{j,k}$ and a $\delta$-extension, we have $\abs{\varX_k-\theta_{j,k}}<\frac{\delta^{q_k}}{1-\delta^{q_k}}\abs{\varX_k}$.
	}
	\label{fig: center condition}
\end{figure}
\begin{proof}
	We apply \Cref{thm: real parameterization} to the graphs of $f_1,\dots,f_M$ and to the zero-sets of the coordinate functions $\varX_1,\dots\varX_{n+1}$, obtaining a cover of $\rReal^n$ by a cylindrical collection of prepared real cellular maps $\{\phi_j:\hExt*{\cell_j\odot\cell[F]_j}{\sigma}\to\cComplex^{n+1}\}$ of size $\polyfd[\format][\degree,M,1/\sigma]$, where the value of $\sigma$ will be chosen later.
	Write the $k$-th coordinate of $\phi_j$ as $\phi_{j,k}(\varZ)=\pm\varZ_k^{q_k}+\varphi_{j,k}(\initial{\varZ}{1}{k-1})$.
	Each of these coordinates is in $\FD*[\format][\degree]$.

	By choosing $\sigma=\polyfd^{-1}\cdot\mu$, the monomialization lemma (\Cref{lem: monomialization}) and our assumption on the compatibility of $\phi_j$ with the zero-sets of the coordinate functions implies that $\phi_{j,k}$ is either identically $0$ on $\hExt*{\cell_j\odot\cell[F]_j}{\sigma}$ or we may write $\phi_{j,k}(\varZ)=\initial{\varZ}{1}{k}^{\boldsymbol{\alpha}_{j,k}}\cdot \widetilde U_{j,k}(\initial{\varZ}{1}{k})$, where $\boldsymbol{\alpha}_{j,k}\in\zIntegers^k$ is as in the statement of \Cref{lem: monomialization} and $\log|\widetilde U_{j,k}|$ maps $\cell_j\odot\cell[F]_j$ to a set of diameter smaller than $\mu$.
	
	In particular, this is true for those $\phi_{j,n+1}$ which parameterize the graph of one of the functions $f_i$ over the base $(\initial{(\phi_{j})}{1}{n})(\rRealPos(\cell_j))$.
	For these cells, we must have $\cell[F]_j=\point$, and so $\phi_{j,n+1}(\varZ)=\initial{\varZ}{1}{n}^{\boldsymbol{\alpha}_{j,n+1}}\cdot \widetilde U_{j,n+1}(\initial{\varZ}{1}{n})$ depends only on the first $n$ coordinates $\initial{\varZ}{1}{n}$.
	In the image of such a cell, we have that $\varX_k=\phi_{j,k}$ and so
	\begin{equation}
		\label{eq: x to z coordinates}
		\abs{\varZ_k}=\abs{\varX_k-\varphi_{j,k}(\initial{\varZ}{1}{k-1})}^{1/q_k}.
	\end{equation}
	Letting $\theta_{j,k}$ and $U_{j,k}$ be obtained from $\varphi_{j,k}$ and $\widetilde U_{j,k}$, respectively, by recursively substituting the $\varX$ coordinates for $\varZ$ coordinates as in \eqref{eq: x to z coordinates}, we obtain expansions of the functions $f_i$ as in \eqref{eq: prepared expansion}.

	It remains to show that, if one of the centers $\theta_{j,k}$ is not identically $0$, then we have \eqref{eq: center condition}.
	This follows easily from the condition in the end of \Cref{rem: no bad annuli} (see \Cref{fig: center condition} for an example in the case of a translate map where $\hyperbolicParameter{\sigma}=1/3$).
	If $\alpha_{i,j,k}\neq 0$ for some $i$, then by \Cref{lem: monomialization} we have that the $k$-th coordinate of $\cell_j$ is of type $\puncDisc$, $\puncDisc*$ or $\annulus$.
	In particular, we have that $\varZ_k\neq 0$ and hence also~$\varX_k\neq \theta_{j,k}(\initial{\varX}{1}{k-1})$.
\end{proof}

\enlargethispage{1cm}
\bibliographystyle{abbrv}
\bibliography{references}

\end{document}